\documentclass[10pt,a4paper]{article}
\usepackage[latin1]{inputenc}
\usepackage{amsmath}
\usepackage{amsfonts}
\usepackage{amssymb}
\usepackage{graphicx}
\usepackage{amsthm}
\usepackage{enumerate}
\usepackage{setspace}
\usepackage{booktabs}
\usepackage{tabularx}
\usepackage{lineno} 
\usepackage{color} 
\usepackage{enumerate}
\usepackage{xcolor}
\usepackage{subfig}
\usepackage{caption}

\usepackage{tikz}
\usetikzlibrary{arrows,shapes,decorations,automata,backgrounds,petri}
\usetikzlibrary{decorations.pathmorphing}

\usepackage[top=1.45in, bottom=1.45in, left=1.15in, right=1.15in]{geometry}
\onehalfspacing

\newtheorem{theorem}{Theorem}[section]

\newtheorem{lemma}{Lemma}[section]

\newtheorem{proposition}{Proposition}[section]
\newtheorem{conjecture}{Conjecture}[section]
\newtheorem{question}{Question}[section]

\newtheorem{claim}{Claim}[]

\newenvironment{proofclaim}[1][]%
    {\noindent \emph{Proof.} {}{#1}{}}{\hfill
    $\Diamond$\vspace{1em}}

\newcommand{\cb}[2]{\lceil \frac{#1}{#2} \rceil}
\newcommand{\fb}[2]{\lfloor \frac{#1}{#2} \rfloor}

\title{Gallai's path decomposition conjecture\\for graphs of small maximum degree}
\author{Marthe Bonamy\thanks{CNRS, LaBRI, Universit\'e de Bordeaux, France} \ and Thomas J. Perrett\thanks{Technical University of Denmark, Denmark}}
\date{September 2016}

\begin{document}

\maketitle

\begin{abstract}
    Gallai's path decomposition conjecture states that the edges of any connected graph on $n$ vertices can be decomposed into at most $\frac{n+1}2$ paths. We confirm that conjecture for all graphs with maximum degree at most five.
\end{abstract}

\section{Introduction}
A \emph{decomposition} $\mathcal{D}$ of a graph $G$ is a collection of subgraphs of $G$ such that each edge belongs to precisely one graph in $\mathcal{D}$. A \emph{path decomposition} is a decomposition $\mathcal{D}$ such that every subgraph in $\mathcal{D}$ is a path. If $G$ has a path decomposition $\mathcal{D}$ such that $|\mathcal{D}|=k$, then we say that $G$ can be \emph{decomposed} into $k$ paths. In answer to a question of Erd\H{o}s, Gallai conjectured the following, see~\cite{Lovasz}.

\begin{conjecture}\label{conj:Gallai}\emph{\cite{Lovasz}}
Every connected graph on $n$ vertices can be decomposed into $\cb{n}{2}$ paths.
\end{conjecture}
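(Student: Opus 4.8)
The plan is to build on Lovász's classical theorem~\cite{Lovasz}, which guarantees that the edges of any connected graph on $n$ vertices decompose into at most $\fb{n}{2}$ paths and cycles taken together. The entire content of Gallai's conjecture then lies in eliminating the cyclic parts: if one could always convert the cycles into paths without pushing the total past $\cb{n}{2}$, the conjecture would follow. So the first step is to fix a potential function---essentially the number of cycles in a most economical path-and-cycle decomposition---and to understand how to drive it to zero.

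First I would record the elementary parity bound. In any path decomposition $\mathcal{D}$ of $G$, the number of paths ending at a vertex $v$ is congruent to $d(v)$ modulo $2$, since every path through $v$ that does not terminate there consumes two edges at $v$. Summing over all vertices gives $2|\mathcal{D}|=\sum_v(\text{ends at }v)$, so if $G$ has exactly $2t$ vertices of odd degree then $|\mathcal{D}|\ge t$, and moreover each odd-degree vertex must be an endpoint of some path. This tells us both that the conjecture has comfortable slack when $t$ is close to $n/2$, and where paths ought to be anchored, namely at the odd-degree vertices. The strategy is then an induction on $n$ (or on $|E(G)|$): delete a carefully chosen structure---ideally a path joining two odd-degree vertices, or a single vertex together with a pairing of its incident edges---apply the induction hypothesis to the resulting pieces, and reattach, arranging that the number of paths created never exceeds half the number of vertices removed.

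The main obstacle, and the reason the conjecture remains open, is the Eulerian case, in which every vertex has even degree, so that $t=0$ and the parity argument is vacuous. Here there is no odd vertex at which to anchor a path, deleting an edge or vertex disrupts the even-degree structure in an uncontrolled way, and the target $\cb{n}{2}$ is genuinely tight on small examples such as odd cliques, so no slack may be squandered. I expect the real work to be a structural analysis of the even-degree vertices. Following the line that succeeds when those vertices induce a forest (Pyber, Fan), one would orient or pair the edges incident to each even-degree vertex so as to thread paths cleanly through them, and then control the interaction between these vertices by a discharging argument or a minimal-counterexample analysis. Reducing the general Eulerian case either to the forest case or to the settled bounded-maximum-degree cases---such as maximum degree five, treated in the present paper---is the step I anticipate to be the genuine bottleneck.
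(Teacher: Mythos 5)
You have not been asked to referee a theorem here but a conjecture: the statement is Gallai's conjecture itself, which is open, and the paper never proves it --- it states it as Conjecture~\ref{conj:Gallai} and establishes only the special case $\Delta(G) \leq 5$ (Theorem~\ref{th:main}), via a minimal-counterexample analysis showing the five configurations $C_1,\dots,C_5$ are reducible, deducing that $G_E$ is a forest, and invoking Pyber's theorem. Your proposal is accordingly not a proof but a correct survey of the landscape together with a declared plan, and the plan's decisive step is missing. Concretely: your parity bound ($|\mathcal{D}| \geq t$ when there are $2t$ odd-degree vertices, each odd vertex being a path-end) is correct and standard, but it yields nothing in the Eulerian direction, and your proposed ``potential function'' scheme --- start from Lov\'asz's decomposition into at most $\fb{n}{2}$ paths and cycles and drive the number of cycles to zero --- founders exactly where you say it does. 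Note the quantitative failure of the naive conversion: a cycle cannot be rewritten as a single path, so breaking each cycle costs at least one additional path, giving only the bound $\fb{n}{2} + c$ where $c$ is the number of cycles in the decomposition; for Eulerian graphs $c$ can be large, and no mechanism for absorbing cycles into paths without increasing the count is supplied. That mechanism is precisely the hard content of the theorems of Pyber and Fan in the cases where $G_E$ is a forest or has weakly structured blocks, and in general it is the open problem.

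So the gap is not a local mistake but the entire mathematical content of the conjecture: everything in your proposal before the words ``I expect the real work to be'' is known, and everything after is an unexecuted intention. To be clear about how your sketch relates to the paper at hand: the paper does not attempt your reduction of the general Eulerian case either. Its contribution is orthogonal --- under the hypothesis $\Delta(G) \leq 5$ it shows a smallest counterexample cannot contain configurations $C_1,\dots,C_5$ (Lemma~\ref{lem:ReducibleConfigs}), which forces $G_E$ to be acyclic (Lemma~\ref{lem:Structure}), at which point Theorem~\ref{thm:Pyber} finishes. If you wish to turn your outline into actual progress, the honest formulation is your closing sentence: one must either extend the class of admissible structures for $G_E$ (as Fan did beyond Pyber) or push the bounded-degree analysis past $\Delta = 5$, and both require new reducible configurations or a genuinely new idea, not the parity and potential-function bookkeeping you have set up.
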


Gallai's conjecture is easily seen to be sharp: If $G$ is a graph in which every vertex has odd degree, then in any path decomposition of $G$ each vertex must be the endpoint of some path, and so at least $\cb{n}{2}$ paths are required. Lov\'{a}sz~\cite{Lovasz} proved that every graph on $n$ vertices has a decomposition $\mathcal{D}$ consisting of paths and cycles, and such that $|\mathcal{D}|=\fb{n}{2}$. By an argument similar to the above, it follows that in a graph with at most one vertex of even degree, such a decomposition must be a path decomposition. Thus, Gallai's conjecture holds for all graphs with at most one vertex of even degree.

Let $G_E$ denote the subgraph of $G$ induced by the vertices of even degree. Building on Lov\'{a}sz's result, Conjecture~\ref{conj:Gallai} has been proved for several classes of graphs defined by imposing some structure on $G_E$. The first result of this kind was obtained by Pyber.

\begin{theorem}\label{thm:Pyber}\emph{\cite{Pyber}}
If $G$ is a graph on $n$ vertices such that $G_E$ is a forest, then $G$ can be decomposed into $\fb{n}{2}$ paths.
\end{theorem}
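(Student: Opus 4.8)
The plan is to deduce the theorem from Lov\'asz's path-and-cycle decomposition by eliminating the cycles one at a time, using the forest hypothesis to guarantee that this is always possible. Concretely, I would apply Lov\'asz's theorem to obtain a decomposition of $G$ into at most $\fb{n}{2}$ paths and cycles, and then, among \emph{all} decompositions of $G$ into paths and cycles of size at most $\fb{n}{2}$, fix one, $\mathcal{D}$, with the fewest cycles. The goal is to show that $\mathcal{D}$ contains no cycle at all, for then $\mathcal{D}$ is the desired path decomposition. So I would suppose for contradiction that some cycle $C \in \mathcal{D}$ survives, and hunt for a local modification that removes a cycle without increasing $|\mathcal{D}|$.

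The forest hypothesis enters exactly here, through two observations. First, $C$ cannot consist solely of vertices of even degree in $G$: the even-degree vertices together with all edges between them form $G_E$, so such a $C$ would be a cycle inside the forest $G_E$, which is impossible. Hence $C$ has a vertex $v$ with $\deg_G(v)$ odd. Second, I would account for the edges at $v$ according to how $\mathcal{D}$ covers them: each cycle through $v$, and each path passing through $v$ internally, uses two edges at $v$, while each path having $v$ as an endpoint uses exactly one. Reducing modulo two, the number of members of $\mathcal{D}$ with $v$ as an endpoint is congruent to $\deg_G(v)$, hence odd; in particular there is at least one path $R \in \mathcal{D}$, distinct from $C$, with $v$ as an endpoint.

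It remains to recombine $C$ and $R$ into two paths, which would contradict the minimality of the number of cycles, and this recombination is the main obstacle. In the easy case, where some neighbour $c$ of $v$ along $C$ does not lie on $R$, I would simply move the edge $vc$ onto $R$: the walk obtained by following $R$ and then $c$ is still a path (since $c \notin V(R)$), and the remaining edges of $C$ form the arc from $c$ back to $v$, a second path; together they cover $E(C)\cup E(R)$. The difficulty is the case where $C$ meets $R$ in several vertices, so that naively splicing the cycle into the path creates a repeated vertex. Here I expect to argue more carefully --- for instance by following $R$ only up to its first contact with $C$ and then routing around $C$ as far as possible before a repeat, or by an induction on $|E(C)|+|E(R)|$ that replaces $C$ by a shorter cycle formed from a sub-path of $R$ --- always arranging that the leftover edges of $C$ constitute a single arc, and hence a path. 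Verifying that two paths, and never more, always suffice in this intersecting case is the crux of the whole argument; once it is established, the extremal choice of $\mathcal{D}$ yields the contradiction and the theorem follows.
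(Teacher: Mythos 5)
You should first note that the paper contains no proof of this statement at all: it is Pyber's theorem, quoted from~\cite{Pyber} and used as a black box (indeed the whole point of the paper's main argument is to reduce to it). So your proposal must stand on its own, and unfortunately it does not: the step you yourself identify as the crux --- that a cycle $C \in \mathcal{D}$ and a path $R \in \mathcal{D}$ ending at a vertex $v \in V(C)$ can always be recombined into two paths covering $E(C) \cup E(R)$ --- is not merely unproven, it is false.

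Here is a counterexample. Let $C$ be the $5$-cycle $v c_1 c_2 c_3 c_4 v$ and let $R$ be the path $u\, c_2\, c_4\, c_1\, c_3\, v$, i.e.\ $R$ consists of a pendant edge $uc_2$ followed by the chords $c_2c_4$, $c_4c_1$, $c_1c_3$, $c_3v$. Then $C$ and $R$ are edge-disjoint, $R$ is a genuine path ending at $v \in V(C)$, and in the union $H = C \cup R$ the degrees are $d(u)=1$, $d(v)=3$ and $d(c_i)=4$ for $i=1,2,3,4$ (in fact $H$ is $K_5$ minus the edge $vc_2$, plus the pendant edge $uc_2$; note that $K_5$ minus one edge is an odd semi-clique in the paper's terminology). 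Suppose $H$ were decomposed into two paths $P_1, P_2$. Each vertex of degree $4$ must then have degree exactly $2$ in both $P_1$ and $P_2$, hence is internal to both and is an endpoint of neither; the vertex $v$, of degree $3$, splits as $1+2$ and so is an endpoint of exactly one path; and $u$ is an endpoint of exactly one path. This yields only $2$ endpoint incidences in total, whereas two (nonempty) paths have $4$. So $E(C)\cup E(R)$ requires at least three paths, and the exchange you need --- which must not increase $|\mathcal{D}|$ --- is impossible. This example sits exactly in your ``hard case'' (both neighbours of $v$ on $C$ lie on $R$), and neither of your suggested repairs (following $R$ to its first contact with $C$, or induction on $|E(C)|+|E(R)|$) can work, since the conclusion itself fails for this pair $(C,R)$. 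Rescuing the strategy would require exploiting the minimality of $\mathcal{D}$ and the global hypothesis on $G_E$ in a much deeper way than recombining two fixed members of $\mathcal{D}$; this is essentially where the real difficulty of Pyber's theorem lies, and why its proof in~\cite{Pyber} is substantially more involved than an exchange argument.
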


Later, Theorem~\ref{thm:Pyber} was strengthened by Fan, who proved the following.

\begin{theorem}\label{thm:Fan}\emph{\cite{Fan}}
If $G$ is a graph on $n$ vertices such that each block of $G_E$ is a triangle free graph of maximum degree at most $3$, then $G$ can be decomposed into $\fb{n}{2}$ paths.
\end{theorem}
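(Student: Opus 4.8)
The plan is to follow the Lovász paradigm: start from a decomposition into paths and cycles of the optimal size and then eliminate every cycle by local exchanges, invoking the structural hypothesis on $G_E$ precisely at the point where a cycle resists being opened. Arguing by contradiction, take a counterexample $G$ minimising $|E(G)|$. By Lovász~\cite{Lovasz}, $G$ has a decomposition into paths and cycles with at most $\fb{n}{2}$ parts; among all such decompositions of size at most $\fb{n}{2}$, fix one $\mathcal{D}$ with as few cycles as possible. It then suffices to prove that $\mathcal{D}$ has no cycle, since such a $\mathcal{D}$ is a path decomposition of the required size.

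First I would isolate the basic cycle-breaking move. Let $C\in\mathcal{D}$ be a cycle and suppose some vertex $v$ of $C$ is the endpoint of a path $P\in\mathcal{D}$ whose interior misses $C$. If $va$ and $vb$ are the two edges of $C$ at $v$, then replacing $\{P,C\}$ by the path obtained by following $P$ to $v$ and then traversing $C$ from $v$ through $a$ to $b$, together with the single edge $vb$, keeps the number of parts unchanged but removes one cycle --- contradicting the minimality of $\mathcal{D}$. Hence, in the unobstructed case, no endpoint of a path lies on a cycle; counting the edges at a cycle vertex then shows its degree is even, so every surviving cycle is a cycle of $G_E$ and, being $2$-connected, lies inside a single block $B$ of $G_E$. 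By hypothesis $B$ is triangle-free with $\Delta(B)\le 3$.

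The core of the proof is to open such a cycle $C\subseteq B$ even though no path ends on it. Here I would use the sparsity of $B$: each vertex of $C$ has at most one neighbour of $G_E$ off $C$, so apart from a few chords or pendants inside $B$ the remaining edges at $C$ lead to odd-degree vertices, and it is along these that the paths of $\mathcal{D}$ traverse $C$. Picking a path $Q$ that passes through a vertex of $C$, I would splice $Q$ with an arc of $C$ so as to turn the cycle into a path, using triangle-freeness to guarantee that the spliced walk never repeats a vertex (a short return to $C$ would create a triangle), and $\Delta(B)\le 3$ to bound the number of competing paths through $C$ so that the rerouting can be carried out coherently. Iterating drives the number of cycles to zero.

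The step I expect to be the real obstacle is exactly the overlap bookkeeping suppressed above: the naive move fails whenever the incident path re-enters $C$, or two incident paths interfere, and excluding these degenerate configurations is precisely what the triangle-free and $\Delta\le 3$ hypotheses on the blocks of $G_E$ buy. Making this rigorous will demand a careful case analysis of how the paths of $\mathcal{D}$ meet $C$ and of how several cycles may share vertices inside $B$; as in the proofs of Lovász~\cite{Lovasz} and of Theorem~\ref{thm:Pyber}, I would expect some of these cases to close only after strengthening the inductive statement --- for instance by prescribing path endpoints or by decomposing relative to a distinguished vertex of $B$ and appealing to the minimality of $|E(G)|$.
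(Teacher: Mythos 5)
Note first that the paper does not prove this statement at all: Theorem~\ref{thm:Fan} is imported from Fan's article~\cite{Fan}, where its proof occupies most of that paper. Your proposal must therefore stand as a self-contained argument, and it does not: it is the standard Lov\'asz exchange paradigm plus a plan, with the actual content missing. The first concrete gap is the deduction ``no endpoint of a path lies on a cycle, hence cycle vertices have even degree, hence every surviving cycle lies in $G_E$.'' Your exchange move requires the path $P$ ending at $v \in C$ to meet $C$ \emph{only} at $v$; minimality of the number of cycles therefore only yields that every path ending on $C$ re-enters $C$, not that no path ends on $C$, and the parity count collapses. This gap is not cosmetic: if the argument up to that point were sound, then in the case where $G_E$ is a forest there would be no cycles of $G_E$ for a surviving cycle to lie in, and you would have a one-paragraph proof of Pyber's theorem (Theorem~\ref{thm:Pyber}), which is itself a nontrivial result with a substantial proof. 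Handling paths that end on a cycle and re-enter it is precisely where the difficulty of Lov\'asz/Pyber/Fan-type results lives.

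The second gap is that the core splicing step misuses the hypotheses. Triangle-freeness and $\Delta \le 3$ are properties of blocks of $G_E$, i.e.\ they constrain only edges both of whose endpoints have even degree in $G$. A path $Q \in \mathcal{D}$ traversing a vertex of $C$ uses edges of $G$ that are in general not edges of $G_E$ (their other endpoints may have odd degree), so ``a short return to $C$ would create a triangle'' does not follow: the returning edges need not lie in $B$ at all, and no triangle of $B$ is produced. Similarly, a vertex of $C$ has at most one $G_E$-neighbour off $C$, but its degree in $G$ is unbounded by the hypothesis, so $\Delta(B)\le 3$ does not bound the number of paths of $\mathcal{D}$ competing at that vertex. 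You yourself flag that the remaining case analysis is open and may require ``strengthening the inductive statement''; that strengthened induction is exactly what Fan constructs, via machinery (carefully prescribed transformations of path systems and a delicate induction) that is entirely absent here. As it stands, the proposal proves only the easy unobstructed case and identifies, without resolving, the obstruction that constitutes the theorem.
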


Gallai's conjecture is also known to hold for a variety of other graph classes. In 1988, Favaron and Koudier~\cite{eulerian} proved that the conjecture holds for graphs where the degree of every vertex is either $2$ or $4$. More recently, Botler and Jim\'{e}nez~\cite{2kReg} proved that the conjecture holds for $2k$-regular graphs of large girth and admitting a pair of disjoint perfect matchings. Jim\'{e}nez and Wakabayashi~\cite{triangleFree} showed that the conjecture holds for a subclass of planar, triangle-free graphs satisfying a distance condition on the vertices of odd degree. Finally, it was shown by Geng, Fang and Li~\cite{outerplanar}, that the conjecture holds for maximal outerplanar graphs. In this article, we prove that Gallai's conjecture holds for the class of graphs with maximum degree at most $5$.

\begin{theorem}\label{th:main}
Let $G$ be a connected graph on $n$ vertices. If $\Delta(G) \leq 5$, then $G$ admits a path decomposition into $\lceil \frac{n}{2} \rceil$ paths.
\end{theorem}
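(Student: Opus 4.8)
\section*{Proof proposal}

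The plan is to argue by contradiction from a minimal counterexample. Let $G$ be a connected graph with $\Delta(G)\le 5$ that cannot be decomposed into $\lceil \frac n2\rceil$ paths, chosen so that $n=|V(G)|$ is minimum and, subject to this, $|E(G)|$ is minimum. The base cases are already supplied by the results quoted in the introduction: if $G$ has at most one vertex of even degree, then Lov\'asz's decomposition into $\lfloor \frac n2\rfloor$ paths and cycles must in fact be a path decomposition; and if every block of $G_E$ is a triangle-free graph of maximum degree at most $3$, then Theorem~\ref{thm:Fan} applies. Hence in the counterexample $G_E$ has at least two vertices and contains a feature forbidden by Fan's theorem, namely a triangle or a vertex whose degree inside its block of $G_E$ is at least $4$. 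The whole difficulty is therefore concentrated on the structure of $G_E$, and the role of the hypothesis $\Delta(G)\le 5$ is to constrain it severely: every even-degree vertex has degree $2$ or $4$ in $G$ (degree $0$ is impossible, as $G$ is connected with $n\ge 2$), so $G_E$ has maximum degree at most $4$, and moreover any vertex of $G_E$-degree $4$ has $G$-degree exactly $4$ and hence all of its neighbours even.

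The first batch of work is a collection of reductions that clean up the counterexample. Using the endpoint-counting identity $2|\mathcal{D}|=\sum_v \mathrm{end}_{\mathcal{D}}(v)$, where $\mathrm{end}_{\mathcal{D}}(v)$ denotes the number of paths of $\mathcal{D}$ with an endpoint at $v$ and $\mathrm{end}_{\mathcal{D}}(v)\equiv d(v)\pmod 2$, I would first argue that $G$ has no vertex of degree $1$: a pendant edge can be appended to a path ending at its neighbour after decomposing $G$ minus the leaf, checking that the target $\lceil \frac n2\rceil$ absorbs the change. Degree-$2$ vertices, and odd-degree vertices whose removal or edge-deletion lowers the target sufficiently, would be ruled out next by similar local surgery: decompose a smaller graph supplied by minimality and reinsert the deleted edges into existing paths. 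The point of these reductions is to force $G_E$ to be ``essentially $4$-regular with triangles'', eliminating all the easy ways in which even vertices could be peeled off.

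The heart of the proof is then a case analysis on the dense part of $G_E$. For a triangle $T$ in $G_E$, or a vertex $u$ of $G_E$-degree $4$, I would design a local modification---deleting one or two suitably chosen edges, or splitting $u$ into two lower-degree vertices---to produce a connected graph $G'$ with $\Delta(G')\le 5$ and fewer vertices or edges, apply minimality to obtain a path decomposition $\mathcal{D}'$ of $G'$, and then reassemble a decomposition of $G$ by re-attaching the removed edges to endpoints of paths in $\mathcal{D}'$. The quantity one must control throughout is the number of even vertices serving as path endpoints: writing $o(G)$ and $e$ for the numbers of odd and even vertices, one has $\lceil \frac n2\rceil = \frac{o(G)}2 + \lceil \frac e2\rceil$, so the budget is exactly $\lceil \frac e2\rceil$ paths beyond the forced minimum $\frac{o(G)}2$, and every reinsertion must avoid creating a new even endpoint that is not compensated elsewhere.

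I expect the main obstacle to be precisely this bookkeeping. The configurations Fan's theorem cannot handle---triangles and degree-$4$ vertices inside $G_E$---are exactly those where a naive reinsertion of a deleted edge is forced to terminate a path at an even-degree vertex, and it is not a priori clear that one can always reroute to absorb it. Overcoming this should require (i) exploiting that such vertices have degree exactly $4$ in $G$, so that their four edges give enough freedom to pair them up two-and-two along paths, and (ii) choosing $G'$ so that the parities of the relevant vertices are preserved, keeping the forced minimum $\frac{o(G')}2$ aligned with $\frac{o(G)}2$. Verifying that finitely many such local configurations exhaust all possibilities, and that each admits a valid reduction, is where the bounded-degree hypothesis does its real work and where the bulk of the case analysis will lie.
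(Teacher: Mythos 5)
Your overall strategy coincides with the paper's: take a vertex-minimal counterexample, show it cannot contain certain local configurations, and finish by invoking a known theorem (you aim at Fan's Theorem~\ref{thm:Fan}; the paper shows $G_E$ is a forest and applies Pyber's Theorem~\ref{thm:Pyber}). But the proposal stops exactly where the proof has to begin. You never specify the finite list of configurations to be reduced, never carry out a single complete reduction, and never prove the structural claim that a configuration-free graph has $G_E$ of the required form. This is not routine verification that can be waved at: the paper needs five carefully chosen configurations --- a degree-$2$ vertex with \emph{non-adjacent} neighbours ($C_1$), a cut-edge joining two even-degree vertices ($C_2$), an edge $uv$ with $d(u)=d(v)=4$ and exactly two common neighbours ($C_3$), a technical variant $C_4$ for degree-$4$ endpoints with prescribed non-edges, and a triangle with degrees in $\{2,4\}$ ($C_5$) --- and each reduction splits into several subcases governed by connectivity of the reduced graph and adjacencies among the neighbours; the triangle configuration alone requires distinguishing how many common neighbours a pair has, whether those neighbours induce a triangle, and whether certain edges are cut-edges. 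The deduction that forbidding these configurations forces $G_E$ to be a forest is itself a separate lemma with its own argument. None of this content, which is the entire substance of the proof, appears in your proposal; you explicitly defer it (``where the bulk of the case analysis will lie'').

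Moreover, two of the steps you do sketch would fail as stated. First, you cannot ``rule out'' degree-$2$ vertices in general: if the two neighbours $v,w$ of a degree-$2$ vertex $u$ are adjacent, the natural reduction $G-u+vw$ creates a multiple edge, which is precisely why the paper's $C_1$ carries the non-adjacency hypothesis and why degree-$2$ vertices in triangles must instead be absorbed into the triangle configuration $C_5$. Second, the leaf reduction is not immediate: after decomposing $G$ minus a leaf $v$ with neighbour $u$, minimality gives you \emph{some} good decomposition $\mathcal{D}'$, but there need not be any path of $\mathcal{D}'$ ending at $u$, and when $n$ is even one has $\lceil\frac{n-1}{2}\rceil+1>\lceil\frac{n}{2}\rceil$, so there is no budget to add $uv$ as a new path. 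These are exactly the parity/endpoint bookkeeping traps you correctly identify as the crux, and they show the outline cannot be completed by ``similar local surgery'' alone: each configuration needs a bespoke reduction of the kind the paper constructs in Lemma~\ref{lem:ReducibleConfigs}, followed by the structural analysis of Lemma~\ref{lem:Structure}.
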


To prove Theorem~\ref{th:main}, we show that if $G$ is a smallest counterexample, then $G$ cannot contain one of $5$ configurations. This restriction is enough to show that $G_E$ is a forest, whence the result follows by Theorem~\ref{thm:Pyber}. It seems that proving Theorem~\ref{th:main} for graphs of maximum degree $6$ will require some new ideas. However, we think the approach of considering graphs of bounded maximum degree allows step-by-step improvements which could eventually lead to a general solution.

In proving special cases of Conjecture~\ref{conj:Gallai}, the presence of a ceiling in the bound brings with it a number of technical complications. It is therefore tempting to explore ways of proving a stronger, ceiling-free version except in a few special cases. We say a graph is an \emph{odd semi-clique} if it is obtained from a clique on $2k+1$ vertices by deleting at most $k-1$ edges. By a simple counting argument, we can see that an odd semi-clique on $2k+1$ vertices does not admit a path decomposition into $k$ paths. It is natural to ask if these are the only obstructions:

\begin{question}\label{conj:tight}
Does every connected graph $G$ that is not an odd semi-clique admit a path decomposition into $\lfloor \frac{|V(G)|}{2} \rfloor$ paths?
\end{question}

\section{Definitions and notation}
All graphs in this article are finite and simple, that is they contain no loops or multiple edges. We say that a path decomposition $\mathcal{D}$ of a graph $G$ is \emph{good} if $|\mathcal{D}| \leq \cb{|V(G)|}{2}$.

In figures we make use of the following conventions: Solid black circles denote vertices for which all incident edges are depicted. White hollow circles denote vertices which may have other, undepicted incident edges. Vertices containing a number indicate a vertex of that specific degree. A dotted line between two vertices indicates that those vertices are non-adjacent.

We will often modify a path decomposition of a graph $G$ to give a path decomposition of another graph $G'$. To describe these modifications we use a number of fixed expressions, which we formally define here. Let $\mathcal{D}$ be a path decomposition of $G$. Let $P \in \mathcal{D}$ be a path and $Q$ be a subpath of $P$. If $R$ is a path in $G'$ with the same end vertices as $P$, we say that we \emph{replace} $Q$ with $R$ to mean that we define a new path $P' = P - Q + R$ and redefine $\mathcal{D}$ to be the collection $\mathcal{D} - P + P'$. If $R$ is a path in $G'$ with an endpoint in common with $P$, we say that we \emph{extend} $P$ with $R$ to mean that we define a new path $P' = P + R$ and redefine $\mathcal{D}$ to be the collection $\mathcal{D} - P + P'$. For a vertex $u$ on $P$, we say that we \emph{split $P$ at $u$} to mean that we define paths $P_1$ and $P_2$ such that $P_1 \cup P_2  = P$ and $P_1 \cap P_2 = u$, and redefine $\mathcal{D}$ to be the collection $\mathcal{D} - P + P_1+P_2$. Finally, for a path $R$ in $G'$, we say that we \emph{add} the path $R$ to mean that we redefine $\mathcal{D}$ to be the collection $\mathcal{D} +R$.

\begin{proposition}\label{prop:2v}
Let $G$ and $G'$ be two graphs such that $|V(G)|\geq |V(G')|+2$, and let $\mathcal{D}$ be a path decomposition of $G$. If there is a good path decomposition $\mathcal{D'}$ of $G'$ and $|\mathcal{D}|\leq|\mathcal{D'}|+1$ then $\mathcal{D}$ is a good path decomposition of $G$. 
\end{proposition}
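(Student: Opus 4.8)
The plan is to unwind the definition of ``good'' and compare the two bounds directly. Recall that $\mathcal{D}'$ being good means $|\mathcal{D}'| \leq \cb{|V(G')|}{2}$, and we wish to conclude that $|\mathcal{D}| \leq \cb{|V(G)|}{2}$. Since by hypothesis $|\mathcal{D}| \leq |\mathcal{D}'| + 1$ and $|\mathcal{D}'| \leq \cb{|V(G')|}{2}$, it suffices to show that
\[
\cb{|V(G')|}{2} + 1 \leq \cb{|V(G)|}{2}.
\]
First I would set $n = |V(G)|$ and $n' = |V(G')|$, so that the assumption $|V(G)| \geq |V(G')| + 2$ reads $n \geq n' + 2$.

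The key step is then a monotonicity-plus-shift property of the ceiling function: I would verify that $\cb{n'}{2} + 1 \leq \cb{n'+2}{2}$, which holds because $\cb{n'+2}{2} = \cb{n'}{2} + 1$ exactly (adding $2$ to the numerator of a halved ceiling simply increments it by $1$, regardless of the parity of $n'$). Combined with the monotonicity of $x \mapsto \cb{x}{2}$ and the inequality $n \geq n'+2$, this yields $\cb{n'}{2} + 1 \leq \cb{n'+2}{2} \leq \cb{n}{2}$. Chaining these estimates gives
\[
|\mathcal{D}| \leq |\mathcal{D}'| + 1 \leq \cb{n'}{2} + 1 \leq \cb{n}{2},
\]
which is precisely the statement that $\mathcal{D}$ is good.

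There is essentially no obstacle here: the proposition is a bookkeeping lemma whose entire content is the elementary inequality on ceilings, and the only mild care needed is to confirm that the identity $\cb{n'+2}{2} = \cb{n'}{2}+1$ holds for both parities of $n'$ (a one-line parity check). The role of the hypothesis $|V(G)| \geq |V(G')| + 2$ is exactly to furnish the ``$+1$'' of slack that the ceiling bound gains when two vertices are added, so that the extra path in $\mathcal{D}$ is absorbed without violating goodness. I expect the author's proof to be a single short computation of this form, and the proposition will later serve as a convenient tool for inductive arguments in which a smallest counterexample $G$ is reduced to a smaller graph $G'$ on at least two fewer vertices.
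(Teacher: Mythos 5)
Your proof is correct and matches the paper's argument: the paper likewise reduces everything to the ceiling identity, writing $|\mathcal{D}| \leq |\mathcal{D}'|+1 \leq \cb{n-2}{2} + 1 = \cb{n}{2}$ with $n = |V(G)|$, which is the same computation as yours read in the opposite direction (from $n$ down to $n-2$ rather than from $n'$ up to $n'+2$). The parity check and the monotonicity observation you spell out are left implicit in the paper, but there is no substantive difference.
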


\begin{proof} Let $|V(G)| = n$. We have $|\mathcal{D}| \leq |\mathcal{D}'|+1 \leq \cb{n-2}{2} + 1 = \cb{n}{2}$. Thus $\mathcal{D}$ is a good path decomposition of $G$.
\end{proof}

\begin{proposition}\label{prop:0v}
Let $G$, $G_1$ and $G_2$ be three graphs such that $|V(G)|\geq |V(G_1)|+|V(G_2)|$, and let $\mathcal{D}$ be a path decomposition of $G$. If there are good path decompositions $\mathcal{D}_1$ and $\mathcal{D}_2$ of $G_1$ and $G_2$ (respectively) and $|\mathcal{D}|\leq|\mathcal{D}_1|+|\mathcal{D}_2|-1$, then $\mathcal{D}$ is a good path decomposition of $G$. 
\end{proposition}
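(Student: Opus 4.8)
The plan is to mimic the proof of Proposition~\ref{prop:2v}, chaining together the hypotheses and the definition of a good path decomposition by a direct computation. Write $n = |V(G)|$. Since $\mathcal{D}_1$ and $\mathcal{D}_2$ are good, we have $|\mathcal{D}_1| \leq \cb{|V(G_1)|}{2}$ and $|\mathcal{D}_2| \leq \cb{|V(G_2)|}{2}$. Combining these with the hypothesis $|\mathcal{D}| \leq |\mathcal{D}_1| + |\mathcal{D}_2| - 1$ gives
\[
|\mathcal{D}| \leq \cb{|V(G_1)|}{2} + \cb{|V(G_2)|}{2} - 1.
\]
The goal is then to bound the right-hand side above by $\cb{n}{2}$, using $n \geq |V(G_1)| + |V(G_2)|$.

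The key step, and the only place requiring any care, is the ceiling inequality $\cb{a}{2} + \cb{b}{2} - 1 \leq \cb{a+b}{2}$ for nonnegative integers $a,b$. I would verify this by a short parity case analysis: if at least one of $a,b$ is even the two ceilings contribute no rounding surplus beyond the single halving, and if both are odd then $\cb{a}{2} + \cb{b}{2} = \frac{a+b}{2}+1$ while $\cb{a+b}{2} = \frac{a+b}{2}$, so the ``$-1$'' exactly absorbs the excess. In all cases the inequality holds. Applying it with $a = |V(G_1)|$ and $b = |V(G_2)|$ yields $|\mathcal{D}| \leq \cb{|V(G_1)|+|V(G_2)|}{2}$.

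Finally, since $n \geq |V(G_1)| + |V(G_2)|$ and the ceiling function is monotone, $\cb{|V(G_1)|+|V(G_2)|}{2} \leq \cb{n}{2}$, so $|\mathcal{D}| \leq \cb{n}{2}$ and $\mathcal{D}$ is a good path decomposition of $G$.

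I do not expect a genuine obstacle here; the statement is a counting lemma in the same spirit as Proposition~\ref{prop:2v}, and the whole argument is a one-line chain of inequalities once the ceiling estimate is in hand. The subtlety to watch is precisely that the subadditivity-type bound for ceilings is \emph{not} $\cb{a}{2}+\cb{b}{2} \leq \cb{a+b}{2}$ (which fails when both $a,b$ are odd), which is exactly why the hypothesis carries the ``$-1$'' slack; keeping that $-1$ aligned with the worst-case parity is the one thing to state carefully.
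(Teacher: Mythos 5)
Your proposal is correct and follows essentially the same route as the paper: the paper's proof is precisely the chain $|\mathcal{D}| \leq |\mathcal{D}_1|+|\mathcal{D}_2|-1 \leq \cb{n_1}{2}+\cb{n_2}{2}-1 \leq \cb{n}{2}$, with the middle ceiling inequality left unjustified. Your parity case analysis simply fills in that step explicitly, which is a fair (if minor) addition rather than a different argument.
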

\begin{proof} Let $G, G_1$ and $G_2$ have $n, n_1$ and $n_2$ vertices respectively. We have $|\mathcal{D}| \leq |\mathcal{D}_1| +|\mathcal{D}_2|- 1 \leq \cb{n_1}{2} + \cb{n_2}{2} - 1 \leq \cb{n}{2}$. Thus $\mathcal{D}$ is a good path decomposition of $G$.
\end{proof}

\begin{proposition}\label{prop:1v}
Let $G$, $G_1$ and $G_2$ be three graphs such that $|V(G)|\geq |V(G_1)|+|V(G_2)|+1$, and let $\mathcal{D}$ be a path decomposition of $G$. If there are good path decompositions $\mathcal{D}_1$ and $\mathcal{D}_2$ of $G_1$ and $G_2$ (respectively) and $|\mathcal{D}|\leq|\mathcal{D}_1|+|\mathcal{D}_2|$, then $\mathcal{D}$ is a good path decomposition of $G$. 
\end{proposition}
\begin{proof} Let $G, G_1$ and $G_2$ have $n, n_1$ and $n_2$ vertices respectively. We have $|\mathcal{D}| \leq |\mathcal{D}_1| +|\mathcal{D}_2| \leq \cb{n_1}{2} + \cb{n_2}{2} \leq \cb{n}{2}$. Thus $\mathcal{D}$ is a good path decomposition of $G$.
\end{proof}

\section{Main Result}

Let $G$ be a graph with $\Delta(G) \leq k$. We first prove that a number of configurations are reducible in $G$, if Gallai's conjecture holds for all smaller graphs of maximum degree $k$.

\begin{lemma}\label{lem:ReducibleConfigs}
Let $k \in \mathbb{N}$. Let $G$ be a connected graph with maximum degree $\Delta(G) \leq k$, and suppose that $G$ does not admit a good path decomposition. If $G$ is vertex minimal with these properties, then $G$ does not contain any of the following configurations (see Figure~\ref{fig:Evertex}):
\begin{enumerate}[\normalfont$C_1$:]
    \item\label{c:v2} A vertex of degree $2$ whose neighbours are not adjacent.
    \item\label{c:cutEdge} A cut-edge $uv$ such that $d(u)$ and $d(v)$ are even.
    \item\label{c:diam} An edge $uv$ such that $u$ and $v$ have precisely $2$ common neighbours, and $d(u)=d(v)=4$.
    \item\label{c:44} An edge $uv$ such that $d(u)=d(v)=4$, and for $t_1,t_2,t_3$ (resp. $w_1,w_2,w_3$) the three other neighbors of $u$ (resp. $v$), the pairs $t_1t_2$ and $w_1w_2$ are not edges and $t_3\neq w_3$.
    \item\label{c:tr} A triangle $uvw$ such that $d(u)=4$ and $d(v),d(w) \in \{2,4\}$.

\end{enumerate}
\end{lemma}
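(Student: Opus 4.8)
The plan is to treat the five configurations separately: assuming that $G$ contains one of them, I would construct a good path decomposition of $G$, contradicting the assumption that $G$ admits none. In every case the scheme is the same. I delete one or two vertices (and, where needed, add a few edges among their neighbours) so as to obtain one or more graphs that are connected, simple, and of maximum degree at most $k$, but with strictly fewer vertices than $G$. Vertex-minimality then supplies good path decompositions of these smaller graphs, which I reassemble into a path decomposition of $G$ using the \emph{replace}, \emph{extend}, \emph{split} and \emph{add} operations, controlling the number of paths with Propositions~\ref{prop:2v}--\ref{prop:1v}. The recurring tool is the parity principle: in any path decomposition the number of paths ending at a vertex $x$ has the same parity as $d(x)$, so a vertex of odd degree must be an endpoint of at least one path.

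For $C_1$, let $w$ be the degree-$2$ vertex with non-adjacent neighbours $a,b$, and set $G' = G - w + ab$. Then $G'$ is connected, simple (here the hypothesis $ab \notin E(G)$ is used), satisfies $\Delta(G') \le k$, and has $|V(G')| = n-1$. By minimality $G'$ has a good decomposition $\mathcal{D}'$; replacing the edge $ab$ by the path $awb$ yields a decomposition $\mathcal{D}$ of $G$ with $|\mathcal{D}| = |\mathcal{D}'| \le \cb{n-1}{2} \le \cb{n}{2}$, a contradiction.

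For $C_2$, deleting the cut-edge $uv$ splits $G$ into components $G_1 \ni u$ and $G_2 \ni v$, each smaller, connected, and of maximum degree at most $k$; by minimality they have good decompositions $\mathcal{D}_1,\mathcal{D}_2$. Since $d_G(u)$ and $d_G(v)$ are even, $u$ has odd degree in $G_1$ and $v$ has odd degree in $G_2$, so by the parity principle $u$ ends a path $P_1 \in \mathcal{D}_1$ and $v$ ends a path $P_2 \in \mathcal{D}_2$. Joining $P_1$ and $P_2$ through the edge $uv$ produces a decomposition of $G$ with $|\mathcal{D}_1|+|\mathcal{D}_2|-1$ paths, which is good by Proposition~\ref{prop:0v}.

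The configurations $C_3$, $C_4$ and $C_5$ are the substance of the lemma, and I expect them to be the main obstacle. Here the distinguished vertices have degree $4$, and after deleting them one must reinsert the six or seven incident edges. The approach is to delete the degree-$4$ vertices and add a carefully chosen set of edges among their external neighbours so that the resulting graph $G'$ is connected, simple, and still of maximum degree at most $k$: this is precisely where the structural hypotheses enter, since ``exactly two common neighbours'' in $C_3$ and the non-edges $t_1t_2$, $w_1w_2$ together with $t_3 \ne w_3$ in $C_4$ are what prevent the added edges from coinciding with existing ones (keeping $G'$ simple), while the degree bound is preserved automatically because each affected neighbour loses at least as many edges as it gains. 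Since two vertices are removed, $|V(G')| = n-2$ and Proposition~\ref{prop:2v} grants one extra path (if $G'$ disconnects one instead invokes Propositions~\ref{prop:0v}--\ref{prop:1v}). The delicate part is then a case analysis on how the paths of a good decomposition $\mathcal{D}'$ meet the neighbours $x,y,a,b$ (respectively $t_1,t_2,t_3,w_1,w_2,w_3$): depending on which of them are path-endpoints, one reattaches the deleted edges by extending or splitting existing paths and adding at most one new path, again using the parity principle to guarantee that the required endpoints exist. Organising these cases so that no more than one additional path is ever created, while respecting the degree bound, is where the real effort lies; $C_5$ is handled along the same lines, deleting $u$ and, according to whether $v$ and $w$ have degree $2$ or $4$, possibly $v$ and $w$ as well, before reinserting the triangle edges.
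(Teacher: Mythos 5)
Your reductions for $C_1$ and $C_2$ are complete, correct, and identical to the paper's. But for $C_3$, $C_4$ and $C_5$ you have written a plan rather than a proof, and those three configurations are where essentially all of the content of the lemma lies. Moreover the plan contains a claim that is false as stated: for $C_3$ you assert that the hypothesis ``exactly two common neighbours'' prevents the edges you would add from coinciding with edges already present in $G$. It does not. The natural edges to add among the external neighbours are $u'y$, $v'x$ (and possibly $xy$), and any or all of $xu'$, $u'y$, $yv'$, $v'x$ may already lie in $E(G)$. The paper's proof of this claim is precisely a case analysis on how many of these four edges exist (at most one; all four; exactly two or three), with a different auxiliary graph and a different reassembly in each case --- for instance, when all four are present one must take $G' = G-u-v$, replace $xu'$ by the path $xvuu'$, and add the new path $u'xuyvv'$. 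Nothing in your sketch produces, or could be routinely expanded into, these constructions.

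The situation for $C_4$ and $C_5$ is similar: connectivity of the reduced graph is the crux, and it cannot be dismissed by saying one ``instead invokes Propositions~\ref{prop:0v}--\ref{prop:1v} if $G'$ disconnects.'' For $C_4$ the paper must first dispose of the case of three common neighbours, then split into cases according to whether $G-u$ has three components, whether $G-\{u,v\}$ has four, and so on; in each case it \emph{relabels} $t_1,t_2,t_3,w_1,w_2,w_3$ so that the two added non-edges reconnect the graph while preserving the properties $t_1t_2, w_1w_2 \notin E(G)$ and $t_3 \neq w_3$. That relabelling argument is the heart of the proof and is absent from your proposal. For $C_5$ the paper needs four genuinely different constructions: one when a pair of triangle vertices has three common neighbours (itself with sub-cases on how many edges those neighbours induce, including a delicate path surgery when they induce a triangle); one using edge contraction when a triangle vertex has degree $2$; one when some external edge is not a cut-edge; and one when all six external edges are cut-edges, where the graph falls into three components and goodness follows not from Propositions~\ref{prop:2v}--\ref{prop:1v} (none of which covers three pieces) but from the explicit count $\frac{|V(G_1)|+1}{2}+\frac{|V(G_2)|+1}{2}+\frac{|V(G_3)|+1}{2} = \frac{|V(G)|}{2}$. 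Since none of these steps follows from the generic recipe you describe, the proposal has a genuine gap covering three of the five claims.
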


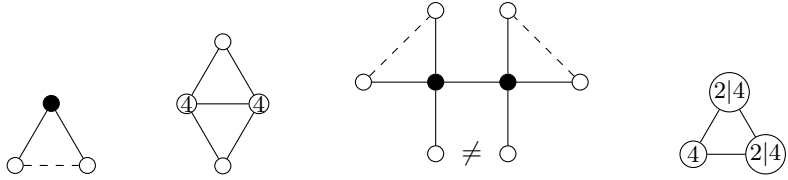
\begin{figure}[!ht]
\centering
\subfloat{
\begin{tikzpicture}[scale=0.95]
\tikzstyle{whitenode}=[draw,circle,fill=white,minimum size=6pt,inner sep=0pt]
\tikzstyle{blacknode}=[draw,circle,fill=black,minimum size=6pt,inner sep=0pt]
\draw (0,0) node[whitenode] (a1) {};
\draw (a1)
--++(60:1) node[blacknode] (a2) {}
--++(-60:1) node[whitenode] (a3) {};
\draw[dashed] (a1) edge node {} (a3);
\end{tikzpicture}
\label{fig:c:v2}
}
\qquad
\subfloat{
\centering
\begin{tikzpicture}[scale=0.95]
\tikzstyle{whitenode}=[draw,circle,fill=white,minimum size=6pt,inner sep=0pt]
\tikzstyle{blacknode}=[draw,circle,fill=black,minimum size=6pt,inner sep=0pt]
\draw (0,0) node[whitenode] (a1) {\small{$4$}};
\draw (a1)
--++(60:1) node[whitenode] (a2) {}
--++(-60:1) node[whitenode] (a3) {\small{$4$}}
--++(-120:1) node[whitenode] (a4) {};
\draw (a1) edge node {} (a3);
\draw (a1) edge node {} (a4);
\end{tikzpicture}
\label{fig:c:diam}
}
\hspace{22pt}
\subfloat{
\begin{tikzpicture}[scale=0.95]
\tikzstyle{whitenode}=[draw,circle,fill=white,minimum size=6pt,inner sep=0pt]
\tikzstyle{blacknode}=[draw,circle,fill=black,minimum size=6pt,inner sep=0pt]
\draw (0,0) node[whitenode] (a1) {}
--++(0:1) node[blacknode] (a2) {}
--++(0:1) node[blacknode] (a3) {}
--++(0:1) node[whitenode] (a4) {};
\draw(a2)
--++(-90:1) node[whitenode] (a2a) {};
\draw(a2)
--++(90:1) node[whitenode] (a2b) {};
\draw(a3)
--++(-90:1) node[whitenode] (a3a) {};
\draw(a3)
--++(90:1) node[whitenode] (a3b) {};
\draw[dashed] (a1) edge node {} (a2b);
\draw[dashed] (a4) edge node {} (a3b);
\draw (1.5,-1) node (t) {$\neq$};
\end{tikzpicture}
\label{fig:44}
}
\hspace{4pt}
\qquad
\subfloat{
\begin{tikzpicture}[scale=0.95]
\tikzstyle{whitenode}=[draw,circle,fill=white,minimum size=10pt,inner sep=0pt]
\tikzstyle{blacknode}=[draw,circle,fill=black,minimum size=6pt,inner sep=0pt]
\draw (0,0) node[whitenode] (a1) {\small{$4$}};
\draw (a1)
--++(60:1) node[whitenode] (a2) {\small{$2 | 4$}}
--++(-60:1) node[whitenode] (a3) {\small{$2 | 4$}};
\draw (a1) edge node {} (a3);
\end{tikzpicture}
\label{fig:tr}
}
\caption{Configurations $C_1$, $C_3$, $C_4$ and $C_5$ from Lemma~\ref{lem:ReducibleConfigs}.}
\label{fig:Evertex}
\end{figure}

\begin{proof}

\begin{claim}\label{cl:v2}
$G$ does not contain the configuration $C_{1}$. 
\end{claim}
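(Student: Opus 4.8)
The plan is to reduce $G$ to a smaller graph by suppressing the degree-$2$ vertex, apply vertex-minimality, and then lift the resulting decomposition back. Suppose for contradiction that $G$ contains $C_1$: a vertex $v$ of degree $2$ with neighbours $a$ and $b$, where $ab \notin E(G)$. I would form $G'$ by deleting $v$ from $G$ and adding the edge $ab$; that is, $G' = G - v + ab$. Since $a$ and $b$ are not adjacent in $G$, the graph $G'$ is simple, and $|V(G')| = n - 1 < n$.

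Next I would check that $G'$ inherits the hypotheses under which $G$ is minimal. Connectivity is preserved: any walk in $G$ passing through $v$ uses the subpath $a$-$v$-$b$, which can be rerouted along the new edge $ab$, so $G'$ is connected. The degrees of $a$ and $b$ are unchanged (each loses its edge to $v$ but gains the edge $ab$), and every other degree is unchanged, so $\Delta(G') \le \Delta(G) \le k$. Thus $G'$ is a connected graph of maximum degree at most $k$ on fewer vertices than $G$, and by the minimality of $G$ it admits a good path decomposition $\mathcal{D}'$, that is, one with $|\mathcal{D}'| \le \cb{n-1}{2}$.

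Finally I would lift $\mathcal{D}'$ to $G$. The edge $ab$ lies on a unique path $P' \in \mathcal{D}'$, and I replace the subpath $ab$ of $P'$ by the path $a$-$v$-$b$, which has the same endpoints, obtaining a path decomposition $\mathcal{D}$ of $G$ with $|\mathcal{D}| = |\mathcal{D}'| \le \cb{n-1}{2} \le \cb{n}{2}$. Hence $\mathcal{D}$ is a good path decomposition of $G$, contradicting the assumption that $G$ admits none.

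There is little genuine difficulty in this first configuration; the only points requiring care are ensuring that $G'$ remains simple, which is precisely what the non-adjacency of $a$ and $b$ guarantees, and that $G'$ stays connected so that minimality applies. The final step is exactly the \emph{replace} operation defined in Section~2, so no new machinery is needed, and since $|V(G)| = |V(G')| + 1$ the bookkeeping is handled by the trivial estimate $\cb{n-1}{2} \le \cb{n}{2}$ rather than by Proposition~\ref{prop:2v}. I expect the later configurations, where one must split, extend, and recombine several paths across one or two reduced graphs (as anticipated by Propositions~\ref{prop:2v}--\ref{prop:1v}), to be substantially more delicate.
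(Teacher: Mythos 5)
Your proof is correct and takes essentially the same route as the paper: delete the degree-$2$ vertex, add the edge between its non-adjacent neighbours (simplicity being guaranteed by the non-adjacency), apply vertex-minimality to get a good decomposition of the smaller graph, and lift it back by replacing that edge with the length-two path through the deleted vertex. The only cosmetic difference is that you close the bookkeeping with the direct estimate $\lceil \frac{n-1}{2} \rceil \le \lceil \frac{n}{2} \rceil$ where the paper invokes Proposition~\ref{prop:1v}; both are valid.
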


\begin{proofclaim}
Suppose that the claim is false. Let $u$ be the vertex of degree $2$ with $N(u) = \{v, w\}$, and let $G'$ be the graph $G - u + vw$. Since $v$ and $w$ are non-adjacent, $G'$ is a simple graph. By the minimality of $G$, we have that $G'$ admits a good path decomposition $\mathcal{D'}$. By Proposition~\ref{prop:1v}, we obtain a good path decomposition of $G$ by replacing the edge $vw$ with the path $vuw$ (see Figure~\ref{fig:v2:tr}).
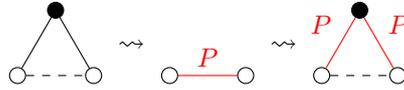
\begin{figure}[!h]
\centering
\begin{tikzpicture}[scale=1,auto]
\tikzstyle{whitenode}=[draw,circle,fill=white,minimum size=6pt,inner sep=0pt]
\tikzstyle{blacknode}=[draw,circle,fill=black,minimum size=6pt,inner sep=0pt]
\tikzstyle{ghost}=[draw=white,circle,minimum size=0pt,inner sep=0pt]
\draw (0,0) node[whitenode] (a1) {};
\draw (a1)
--++(60:1) node[blacknode] (a2) {}
--++(-60:1) node[whitenode] (a3) {};
\draw[dashed] (a1) edge node {} (a3);
\draw (1.5,0.4) node[ghost] (a1) {$\leadsto$};
\draw (2,0) node[whitenode] (a1) {};
\draw (a1)
++(60:1) node[ghost] (a2) {}
++(-60:1) node[whitenode] (a3) {};
\draw[red] (a1) edge node {$P$} (a3);
\draw (3.5,0.4) node[ghost] (a1) {$\leadsto$};
\draw (4,0) node[whitenode] (a1) {};
\draw (a1)
++(60:1) node[blacknode] (a2) {}
++(-60:1) node[whitenode] (a3) {};
\draw[red] (a1) edge node {$P$} (a2);
\draw[red] (a2) edge node {$P$} (a3);
\draw[dashed] (a1) edge node {} (a3);
\end{tikzpicture}
\caption{The reduction of $C_1$.}
\label{fig:v2:tr}
\end{figure}

This contradicts the assumption that $G$ has no such decomposition.
\end{proofclaim}

\begin{claim}\label{cl:cutEdge}
$G$ does not contain the configuration $C_2$. 
\end{claim}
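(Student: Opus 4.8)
The plan is to use the cut-edge to break $G$ into two strictly smaller graphs, decompose each one by minimality, and then splice the two decompositions together across the cut-edge so that the total number of paths drops by exactly one, landing us in the scope of Proposition~\ref{prop:0v}. Concretely, suppose for contradiction that $G$ contains such a cut-edge $uv$, and let $G_1$ and $G_2$ be the two connected components of $G-uv$, labelled so that $u\in V(G_1)$ and $v\in V(G_2)$. Each $G_i$ is connected, has maximum degree at most $k$, and has strictly fewer vertices than $G$, so by the minimality of $G$ it admits a good path decomposition $\mathcal{D}_i$. Since $V(G_1)$ and $V(G_2)$ partition $V(G)$ we have $|V(G)|=|V(G_1)|+|V(G_2)|$, so by Proposition~\ref{prop:0v} it suffices to build a path decomposition $\mathcal{D}$ of $G$ with $|\mathcal{D}|\le|\mathcal{D}_1|+|\mathcal{D}_2|-1$.

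The crucial point is a parity argument. In $G_1$ the vertex $u$ has degree $d(u)-1$, which is odd since $d(u)$ is even; likewise $v$ has odd degree in $G_2$. In any path decomposition, the number of paths that have a fixed vertex $x$ as an endpoint is congruent modulo $2$ to the degree of $x$, because each path passing through $x$ internally uses two of its incident edges while each path ending at $x$ uses exactly one. Hence $u$ is an endpoint of at least one path $P_1\in\mathcal{D}_1$, and $v$ is an endpoint of at least one path $P_2\in\mathcal{D}_2$.

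To finish, I would take $\mathcal{D}_1\cup\mathcal{D}_2$, which decomposes every edge of $G$ except $uv$, and extend $P_1$ across the cut-edge to absorb $P_2$: because $P_1$ and $P_2$ lie in different components of $G-uv$ they are vertex-disjoint, and $uv$ joins their endpoints $u$ and $v$, so $P_1+uv+P_2$ is a genuine path of $G$. Replacing $P_1$ and $P_2$ by this single path gives a decomposition $\mathcal{D}$ with $|\mathcal{D}|=|\mathcal{D}_1|+|\mathcal{D}_2|-1$, and Proposition~\ref{prop:0v} then guarantees $\mathcal{D}$ is good, contradicting the choice of $G$.

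There is essentially only one thing to watch, and it is where the hypothesis earns its keep: the whole splicing works precisely because both $d(u)$ and $d(v)$ are even, which is what forces $u$ and $v$ to be genuine path endpoints in their respective sub-decompositions. I would also note in passing that the even degrees are at least $2$, so $u$ retains a neighbour in $G_1$ and $v$ in $G_2$; thus neither $G_i$ is an isolated vertex and both $\mathcal{D}_i$ are well-defined, so no degenerate case arises. Beyond this, I expect no real obstacle.
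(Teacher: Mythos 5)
Your proposal is correct and follows essentially the same argument as the paper: delete the cut-edge, obtain good decompositions of the two components by minimality, use the parity observation that $u$ and $v$ have odd degree in $G_1$ and $G_2$ respectively to find paths ending there, splice them across $uv$, and conclude via Proposition~\ref{prop:0v}. Your explicit justification of the parity fact and the remark ruling out degenerate single-vertex components are slightly more detailed than the paper's exposition, but the reasoning is identical.
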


\begin{proofclaim}
Suppose that the claim is false. Deleting $uv$ results in two connected graphs $G_1$ and $G_2$, containing $u$ and $v$ respectively. By the minimality of $G$, both $G_1$ and $G_2$ admit good path decompositions $\mathcal{D}_1$ and $\mathcal{D}_2$. To obtain a path decomposition of $G$, note that, since $u$ has odd degree in $G_1$, there is a path $P_u \in \mathcal{D}_1$ ending at $u$. Similarly, there is a path $P_v \in \mathcal{D}_2$ ending at $v$. Now let $\mathcal{D}$ be the path decomposition of $G$ formed by taking the union $\mathcal{D}_1 \cup \mathcal{D}_2$, deleting $P_u$ and $P_v$, and adding a new path $P = P_u+uv+P_v$ (see Figure~\ref{fig:cut:tr}). By Proposition~\ref{prop:0v}, $\mathcal{D}$ is a good path decomposition of $G$, a contradiction.

\begin{figure}[!ht]
    \centering
\begin{tikzpicture}[scale=1,auto]
\tikzstyle{whitenode}=[draw,circle,fill=white,minimum size=6pt,inner sep=0pt]
\tikzstyle{blacknode}=[draw,circle,fill=black,minimum size=6pt,inner sep=0pt]
\tikzstyle{ghost}=[draw=white,circle,minimum size=0pt,inner sep=0pt]
\draw (0,0) node[blacknode] (a) {};
\draw (a)
--++(0:1) node[blacknode] (b) {};
\draw (a)
--++(120:1) node[whitenode] (a1) {};
\draw (a)
--++(180:1) node[whitenode] (a2) {};
\draw (a)
--++(-120:1) node[whitenode] (a3) {};
\draw (b)
--++(60:1) node[whitenode] (b1) {};
\draw (b)
--++(30:1) node[whitenode] (b2) {};
\draw (b)
--++(0:1) node[whitenode] (b3) {};
\draw (b)
--++(-30:1) node[whitenode] (b4) {};
\draw (b)
--++(-60:1) node[whitenode] (b5) {};

\draw (2.5,0) node[ghost] (a1) {$\leadsto$};

\draw (4,0) node[blacknode] (a) {};
\draw (a)
++(0:1) node[blacknode] (b) {};
\draw (a)
--++(120:1) node[whitenode] (a1) {};
\draw (a)
++(180:1) node[whitenode] (a2) {};
\draw (a)
--++(-120:1) node[whitenode] (a3) {};
\draw (b)
++(60:1) node[whitenode] (b1) {};
\draw (b)
--++(30:1) node[whitenode] (b2) {};
\draw (b)
--++(0:1) node[whitenode] (b3) {};
\draw (b)
--++(-30:1) node[whitenode] (b4) {};
\draw (b)
--++(-60:1) node[whitenode] (b5) {};
\draw[red,thick] (a2) edge node {$P_u$} (a);
\draw[blue,thick] (b) edge node {$P_v$} (b1);

\draw (6.5,0) node[ghost] (a1) {$\leadsto$};
\draw (8,0) node[whitenode] (a) {};
\draw (a)
++(0:1) node[blacknode] (b) {};
\draw (a)
--++(120:1) node[whitenode] (a1) {};
\draw (a)
++(180:1) node[whitenode] (a2) {};
\draw (a)
--++(-120:1) node[whitenode] (a3) {};
\draw (b)
++(60:1) node[whitenode] (b1) {};
\draw (b)
--++(30:1) node[whitenode] (b2) {};
\draw (b)
--++(0:1) node[whitenode] (b3) {};
\draw (b)
--++(-30:1) node[whitenode] (b4) {};
\draw (b)
--++(-60:1) node[whitenode] (b5) {};
\draw[blue!50!red,thick] (a2) edge node {$P$} (a);
\draw[blue!50!red,thick] (b) edge node {$P$} (b1);
\draw[blue!50!red,thick] (a) edge node {$P$} (b);
\end{tikzpicture}
    \caption{The reduction of $C_2$.}
    \label{fig:cut:tr}
\end{figure}
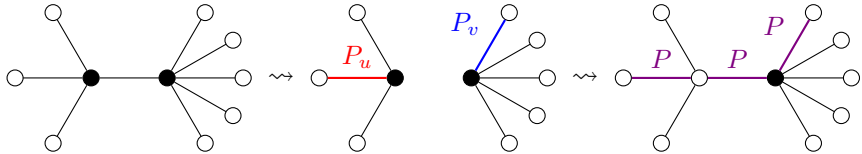
\end{proofclaim}

\begin{claim}\label{cl:diam}
$G$ does not contain the configuration $C_3$. 
\end{claim}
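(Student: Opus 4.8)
The plan is to delete $u$ and $v$, obtaining a graph on $n-2$ vertices, and to apply Proposition~\ref{prop:2v}, which grants a surplus of one path. Write $N(u)=\{v,x,y,z\}$ and $N(v)=\{u,x,y,w\}$, where $x,y$ are the two common neighbours and $z,w$ are the remaining neighbours of $u$ and $v$. Because $u$ and $v$ have \emph{precisely} two common neighbours we have $z\neq w$, $z\not\sim v$ and $w\not\sim u$, so that $u,v,x,y,z,w$ are six distinct vertices. Deleting $u$ and $v$ destroys the seven edges $uv,ux,uy,uz,vx,vy,vw$; call this set $H$. The idea is to encode two of the paths that will cover $H$ as auxiliary edges on $\{x,y,z,w\}$, apply minimality to the smaller graph, and then exchange those edges for detours through $u$ and $v$, covering the remaining edges of $H$ with a single additional path.

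Suppose first that $zx,wy\notin E(G)$ and put $G'=(G-u-v)+\{zx,wy\}$. The new edges join distinct non-adjacent vertices, so $G'$ is simple with $|V(G')|=n-2$, and by minimality it has a good path decomposition $\mathcal{D}'$. I would then replace the edge $zx$ with the path $zux$, replace the edge $wy$ with the path $wvy$, and add the path $yuvx$. Together these use precisely the edges of $H$, they are pairwise edge-disjoint and edge-disjoint from the rest of $\mathcal{D}'$, and since $u,v\notin V(G')$ the two replacements keep their host paths internally simple. This yields a path decomposition $\mathcal{D}$ of $G$ with $|\mathcal{D}|=|\mathcal{D}'|+1$, which is good by Proposition~\ref{prop:2v}, contradicting the choice of $G$.

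The pair $\{zx,wy\}$ is one of the three perfect matchings of the complete graph on $\{x,y,z,w\}$, and the remaining two yield symmetric reductions: $\{zy,wx\}$ (detours $zuy$, $wvx$ and extra path $xuvy$) and $\{xy,zw\}$ (detours $xuy$, $zuvw$ and extra path $xvy$). One of the three is always available unless each matching contains an edge of $G$; inspecting the transversals shows that this happens exactly when $\{x,y,z,w\}$ spans a triangle or contains a vertex joined to the other three.

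The hard part will be these edge-dense residual cases, where no matching on $\{x,y,z,w\}$ is free. The extra room I would exploit is twofold: the two encoded edges need not form a matching (any two non-edges of $G$ among $\{x,y,z,w\}$ suffice), and the single extra path, being built directly in $G$, may pass through the surviving vertices $x$ and $y$, whereas the absorbed detours must be confined to the deleted vertices $u,v$ to avoid creating a repeated vertex on a host path. For example, if $x$ is adjacent to $y,z,w$ one can take the extra path to be $yuxvw$ and absorb the remaining edges $uz,uv,vy$ as the detour $zuvy$, recorded by the single edge $zy$ provided $zy\notin E(G)$. Pushing this further, the only configurations that resist every such rerouting are the very densest ones, in which $\{x,y,z,w\}$ is close to complete and hence $\{u,v,x,y,z,w\}$ is close to $K_6-\{uw,vz\}$; these form a short finite list of small graphs that can be decomposed by hand (possibly using the already-established absence of $C_1$ and $C_2$). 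I expect this case bookkeeping, rather than any single step, to be the real content of the claim.
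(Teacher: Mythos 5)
Your generic case is essentially the paper's first case, but two things go wrong, and the second is fatal. The smaller issue: you apply minimality to $G'=(G-u-v)+\{zx,wy\}$ without checking that $G'$ is connected. Minimality only yields good decompositions of \emph{connected} smaller graphs, and $G-u-v$ may well split into a component containing $\{x,z\}$ and one containing $\{y,w\}$, making $G'$ disconnected; applying minimality componentwise then fails numerically, since two odd components give $\lceil n_1/2\rceil+\lceil n_2/2\rceil+1=\lceil n/2\rceil+1$ paths. The paper guards against exactly this by also adding the diagonal $xy$ (which is a non-edge whenever $x$ and $y$ lie in different components) and then \emph{replacing} $xy$ by the extra path $xuvy$ instead of adding a new path; your construction needs the same repair.

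The fatal gap is your treatment of the blocking cases. You assert that the configurations resisting every rerouting form ``a short finite list of small graphs that can be decomposed by hand''. This is false: $C_3$ constrains only $u$ and $v$, so $x,y,z,w$ may have arbitrarily many further neighbours --- the lemma is stated for every $k$, and even for $k=5$, when $\{x,y,z,w\}$ is complete the vertices $z$ and $w$ each still have a free degree leading into an arbitrary rest of the graph. So each dense local picture is an infinite family, and no finite hand-check can close the argument. The idea you are missing is that two auxiliary edges are never necessary. The paper's case analysis runs on the $4$-cycle $xzyw$ (the edges joining a common neighbour of $u,v$ to a private one): if at least two of these four edges are present but one, say $xz$, is missing, then adding that single edge (chosen so that $G'=G-u-v+xz$ is connected, which is always possible in this case) suffices, because the detour $xvuz$ covers three of the seven deleted edges and the one added path $xuyvw$ covers the other four. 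If all four cycle edges are present, no auxiliary edge is needed at all: $G-u-v$ is then automatically connected, and one \emph{replaces the existing edge} $xz$ by the detour $xvuz$ and re-covers $xz$ inside the single added path $zxuyvw$. This replace-and-recover move on an existing edge is precisely what eliminates your dense residual cases; without it (or an equivalent device) the proposal does not close.
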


\begin{proofclaim}
Suppose that the claim is false. Let $x$ and $y$ be the common neighbours of $u$ and $v$. Since $d(u) = d(v) = 4$, both $u$ and $v$ both have precisely one other neighbour. Let these vertices be $u'$ and $v'$ respectively. Since $u$ and $v$ have precisely two common neighbours, we have that $u' \neq v'$. Suppose first that at most one of the edges $xu', u'y, yv', v'x$ is present in $G$, say $xu'$. If $G - u -v$ is connected, then let $G'$ be the graph $G - u - v + u'y + v'x$. Otherwise let $G' = G - u - v + u'y + v'x + xy$. Note that $G'$ is connected, and so by the minimality of $G$, it admits a good path decomposition. Now, replace $v'x$ by $xvv'$ and replace $u'y$ by $u'uy$. Furthermore, if $xy \in E(G')\setminus E(G)$, then replace $xy$ by $xuvy$. Otherwise add a new path $xuvy$ to the decomposition. By Proposition~\ref{prop:2v}, and since we add at most one new path, the resulting decomposition is a good path decomposition of $G$. This contradicts the assumption that $G$ has no such decomposition.
    
Next, suppose that $xu', u'y, yv', v'x \in E(G)$, so the graph $G' = G - u-v$ is connected. By the minimality of $G$, the graph $G'$ has a good path decomposition. Now replace the edge $xu'$ with the path $xvuu'$, and add a new path $u'xuyvv'$ to the decomposition. By Proposition~\ref{prop:2v}, and since we add at most one new path, the resulting decomposition is a good path decomposition of $G$, contradicting the assumption.
    
Finally, suppose that precisely two or three of the edges $xu', u'y, yv', v'x$ are present in $G$. As a consequence, from the set $\{xu', u'y, yv', v'x\} \setminus E(G)$, we may choose an edge, $xu'$ say, such that the graph $G' = G - u - v + xu'$ is connected. By the minimality of $G$, the graph $G'$ has a good path decomposition. Now replace $xu'$ by $xvuu'$, and add a new path $xuyvv'$ to the decomposition. Again, by Proposition~\ref{prop:2v}, and since we add at most one new path, the resulting decomposition is a good path decomposition of $G$, contradicting the assumption.

\end{proofclaim}

\begin{claim}\label{cl:44}
$G$ does not contain the configuration $C_4$. 
\end{claim}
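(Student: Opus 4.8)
The plan is to reduce $C_4$ by deleting both $u$ and $v$ and applying Proposition~\ref{prop:2v}, so that it suffices to produce a good decomposition of a graph $G'$ on two fewer vertices while creating at most one new path. The seven edges incident with $u$ or $v$ (namely $uv$ together with $ut_1,ut_2,ut_3$ and $vw_1,vw_2,vw_3$) will be recovered by a fixed template: set $G' = G - u - v + t_1t_2 + w_1w_2$, which is simple since $t_1t_2,w_1w_2 \notin E(G)$. Given a good decomposition $\mathcal{D}'$ of $G'$, I would replace the edge $t_1t_2$ by the path $t_1ut_2$, replace $w_1w_2$ by $w_1vw_2$, and add the single new path $t_3uvw_3$ (well defined since $t_3 \neq w_3$). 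These three paths cover exactly the seven missing edges, and the construction creates only one new path, so Proposition~\ref{prop:2v} yields the desired contradiction.

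The point that makes this template safe is that $u$ and $v$ are each introduced on two \emph{distinct} paths of the new decomposition: $u$ lies on the replacement $t_1ut_2$ and on the added path $t_3uvw_3$, while $v$ lies on $w_1vw_2$ and on $t_3uvw_3$. Since the replacements only insert the brand-new vertices $u,v$ as internal vertices of already-existing paths, and the added path is genuinely separate, no path ever receives the same vertex twice; this is exactly the degeneracy that a naive three-replacement reduction (adding also an edge $t_3w_3$) would risk. Hence, \emph{whenever $G'$ is connected} the argument goes through verbatim, which in particular disposes of the case where $G-u-v$ is connected. One must also keep in mind the possible coincidences among the six neighbours: since $C_3$ is excluded and $d(u)=d(v)=4$, the vertices $u$ and $v$ share $0$, $1$, or $3$ common neighbours but never exactly $2$, and in each case one checks the four endpoints of the template paths remain distinct.

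The main obstacle is therefore the connectivity of $G'$. I would analyse the components of $G-u-v$, each of which meets the boundary set $\{t_1,t_2,t_3,w_1,w_2,w_3\}$. Because $C_2$ is excluded, the edge $uv$ (with $d(u),d(v)$ even) is not a cut-edge, so $G-u-v$ contains a $t_i$--$w_j$ path bridging the ``$u$-side'' to the ``$v$-side''; combined with the added edges $t_1t_2$ and $w_1w_2$, this collapses the components so much that the only boundary vertices which can remain separated from the main blob are the special neighbours $t_3$, $w_3$, or, when the bridge runs through $t_3$ or $w_3$, a whole pair $\{t_1,t_2\}$ or $\{w_1,w_2\}$. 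If $t_3$ (resp.\ $w_3$) alone is stranded, then its component attaches to the rest of $G$ only through $ut_3$ (resp.\ $vw_3$), which is thus a cut-edge; here I would split $G$ along this cut-edge while \emph{keeping the degree-$4$ vertex $u$ on both sides}, so that the splice at $u$ saves a path (it is forced to be a path-endpoint on each side, having degree $1$ on the pendant side and odd degree on the other), exactly as in the reduction of $C_2$.

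The remaining and fiddliest situation is when a pair $\{t_1,t_2\}$ (or $\{w_1,w_2\}$) is stranded over $u$ (resp.\ $v$): then $u$ is a cut-vertex whose pendant side attaches through the two edges $ut_1,ut_2$ with $t_1t_2 \notin E(G)$, and a plain union of the two sides can overshoot the bound by one. Here I would instead reconnect $G'$ by adding an edge $t_1t_3$ or $t_2t_3$ and re-route the template as $t_1ut_3$, $w_1vw_2$, and new path $t_2uvw_3$ (again with $u,v$ on distinct paths). The availability of such a non-edge is precisely what the exclusion of $C_3$ guarantees: if $d(t_3)=4$ and both $t_1t_3,t_2t_3 \in E(G)$ then $u$ and $t_3$ would have exactly the two common neighbours $t_1,t_2$, a copy of $C_3$; the low-degree possibilities for $t_3$ are then handled separately. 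The hard part of the whole claim is exactly this book-keeping: guaranteeing that $G'$ can always be made connected (or cleanly split along a cut-edge/cut-vertex) while never exceeding the one-path budget of Proposition~\ref{prop:2v}, and simultaneously absorbing the coincidences among $t_1,t_2,t_3,w_1,w_2,w_3$.
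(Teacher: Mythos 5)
Your core template is exactly the paper's: form $G' = G-u-v+t_1t_2+w_1w_2$, replace $t_1t_2$ by $t_1ut_2$ and $w_1w_2$ by $w_1vw_2$, add the single new path $t_3uvw_3$, and invoke Proposition~\ref{prop:2v}; your treatment of the three-common-neighbour case also matches the paper's. The difference, and the problem, lies in the connectivity analysis. Your taxonomy of what can be ``stranded'' is incomplete, and your repair of the stranded-pair case fails in a configuration that the proof must handle. Concretely, suppose $G-u-v$ has three components $D_1 \ni t_1,t_2$, $D_2 \ni t_3,w_3$, $D_3 \ni w_1,w_2$, with no further boundary vertices in any of them. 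Here the bridge forced by the exclusion of $C_2$ runs from $t_3$ to $w_3$, no edge $ut_i$ or $vw_j$ is a cut-edge (so your splice does not apply), and \emph{both} pairs $\{t_1,t_2\}$ and $\{w_1,w_2\}$ are stranded simultaneously. Your repair adds $t_1t_3$ but keeps the edge $w_1w_2$, which is now internal to the component $D_3$; the resulting graph $G''$ is still disconnected, so minimality cannot be applied to it (and applying it componentwise overshoots the one-path budget of Proposition~\ref{prop:2v}). What is needed is a simultaneous relabelling of both $T=\{t_1,t_2,t_3\}$ and $W=\{w_1,w_2,w_3\}$: choose non-edges $e_T$ inside $T$ and $e_W$ inside $W$ so that $G-u-v+e_T+e_W$ is connected (here $t_1t_3$ and $w_1w_3$), and run the template on the relabelled vertices, checking that the new $t_3$ and $w_3$ are still distinct. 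This is precisely the paper's Case 3, and it is the step your write-up is missing. (Your taxonomy also misses a stranded component of the form $\{t_3,w_3\}$, as with components $\{t_1,w_1\},\{t_2,w_2\},\{t_3,w_3\}$; there your $t$-side fix would happen to work, but nothing in your case list routes you to it.)

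Two smaller remarks. First, your appeal to $C_3$ to find the non-edge $t_1t_3$ or $t_2t_3$ is unnecessary, and the dangling ``low-degree possibilities for $t_3$ are handled separately'' never gets resolved: when $\{t_1,t_2\}$ is stranded, $t_3$ lies in a different component of $G-u-v$, so both $t_1t_3$ and $t_2t_3$ are automatically non-edges. Second, your cut-edge splice (splitting $G$ at the cut-edge $ut_3$, keeping $u$ on both sides, and merging a path ending at $u$ from each side) is sound and is a genuinely different, arguably cleaner, replacement for the paper's Cases 1 and 2, which instead split off whole components and use Proposition~\ref{prop:1v}. But note that your splice is not covered by any of the paper's propositions, since the two pieces share the vertex $u$ and $n_1+n_2=n+1$; the count $\lceil \frac{n_1}{2}\rceil + \lceil \frac{n_2}{2}\rceil - 1 \le \lceil \frac{n}{2}\rceil$ must be verified separately (it does hold in all parities).
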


\begin{proofclaim}
Suppose that the claim is false. Since $G$ does not contain Configuration $C_3$, the vertices $u$ and $v$ do not have precisely two common neighbours. First suppose that $u$ and $v$ have $3$ common neighbours $x,y$ and $z$. In this case, since there is a pair of non-adjacent vertices amongst $N(u) \setminus \{ v \}$, we may assume $xy \not\in E(G)$. Furthermore, by the definition of Configuration $C_4$, the third vertex $z$ is non-adjacent to at least one of $x$ or $y$. We conclude that there are two non-edges amongst $x,y$ and $z$, say these are $xy$ and $yz$. Let $G'$ be the graph $G - u - v + xy + yz$. It is easy to see that $G'$ is connected. By the minimality of $G$, the graph $G'$ has a good path decomposition. In this decomposition, replace $xy$ by $xuy$ and replace $yz$ by $yvz$. Finally, add a new path $xvuz$ (see Figure~\ref{fig:44:tr0}). This gives a good path decomposition of $G$, a contradiction. We may thus assume that $u$ and $v$ have at most one common neighbour.

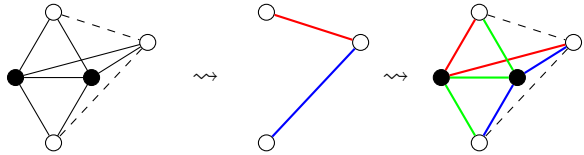
\begin{figure}[!ht]
    \centering
\begin{tikzpicture}[scale=1,auto]
\tikzstyle{whitenode}=[draw,circle,fill=white,minimum size=6pt,inner sep=0pt]
\tikzstyle{blacknode}=[draw,circle,fill=black,minimum size=6pt,inner sep=0pt]
\tikzstyle{ghost}=[draw=white,circle,minimum size=0pt,inner sep=0pt]
\draw (0,0) node[blacknode] (a1) {};
\draw (a1)
--++(60:1) node[whitenode] (a2) {}
--++(-60:1) node[blacknode] (a3) {}
--++(-120:1) node[whitenode] (a4) {};
\draw (a1) edge node {} (a3);
\draw (a1) edge node {} (a4);
\draw (a1) --++(15:1.8) node[whitenode] (a5) {};
\draw (a5) -- (a3);
\draw[dashed] (a2) -- (a5) -- (a4);

\draw (2.5,0) node[ghost] (a1) {$\leadsto$};

\draw (2.8,0) node (a1) {};
\draw (a1)
++(60:1) node[whitenode] (a2) {}
++(-60:1) node (a3) {}
++(-120:1) node[whitenode] (a4) {};
\draw (a1) ++(15:1.8) node[whitenode] (a5) {};
\draw[red,thick] (a2) -- (a5);
\draw[blue,thick] (a5)-- (a4);
\draw (5,0) node[ghost] (a1) {$\leadsto$};
\draw (5.6,0) node[blacknode] (a1) {};
\draw (a1)
++(60:1) node[whitenode] (a2) {}
++(-60:1) node[blacknode] (a3) {}
++(-120:1) node[whitenode] (a4) {};
\draw (a1) ++(15:1.8) node[whitenode] (a5) {};
\draw[thick,red] (a2) -- (a1)-- (a5);
\draw[blue,thick] (a5)-- (a3) -- (a4);
\draw[green,thick] (a2)-- (a3) -- (a1) -- (a4);
\draw[dashed] (a2) -- (a5) -- (a4);
\end{tikzpicture}
    \caption{The reduction of $C_4$ in the case where $u$ and $v$ have three common neighbors.}
    \label{fig:44:tr0}
\end{figure}

We now consider three cases depending on the structure of $G-\{u,v\}$. In each case we assume the previous ones do not apply (up to symmetry).

    \begin{enumerate}
        \item \emph{Assume that $G-u$ has at least three connected components}. Because $uv$ is not a cut-edge, the component of $G-u$ containing $v$ contains at least one other neighbor of $u$. Thus $G-u$ has precisely three components, and $t_1$ and $t_2$ lie in different components of $G-u$. Let $G'$ be the graph formed from $G-u$ by adding the edge $t_1t_2$. Thus $G'$ has two components $G_1$ and $G_2$, and by the minimality of $G$, both have good path decompositions $\mathcal{D}_1$ and $\mathcal{D}_2$. Without loss of generality we suppose $G_2$ contains $v$. Let $P \in \mathcal{D}_1$ be the path containing the edge $t_1t_2$. Furthermore, let $P_1$ and $P_2$ be the possibly empty subpaths of $P - t_1t_2$ containing $t_1$ and $t_2$ respectively. Note that since $v$ has degree $3$ in $G'$, there is some path $Q \in \mathcal{D}_2$ which ends at $v$. We construct a path decomposition of $G$ by taking the union $\mathcal{D}_1 \cup \mathcal{D}_2$ and replacing $P$ and $Q$ with the paths $P_1 + t_1uv + Q$ and $P_2+t_2ut_3$. By Proposition~\ref{prop:1v}, and since we introduced no new paths, the resulting path decomposition is good, a contradiction.

        \item \emph{Assume that $G-\{u,v\}$ has at least four connected components}.  Since both $G-u$ and $G-v$ have at most two connected components, there are precisely four connected components $C_1, C_2, C_3$ and $C_4$. Furthermore, two of these components contain both a neighbour of $u$ and a neighbour of $v$, one component contains only a neighbour of $u$, and one component contains only a neighbour of $v$. Relabeling if necessary, we may suppose that $t_1, w_1 \in C_1$, $t_2, w_2 \in C_2$, $t_3 \in C_3$ and $w_3 \in C_4$. This relabelling preserves the fact that $t_1t_2, w_1w_2 \not \in E(G)$ and $t_3 \neq w_3$. Consider the graph $G_1$ obtained from $C_1$ and $C_2$ by adding the edges $t_1t_2$ and $w_1w_2$. Similarly, consider the graph $G_2$ obtained from $C_3$ and $C_4$ by adding the edge $t_3w_3$. By the minimality of $G$, we obtain good path decompositions of $G_1$ and $G_2$, which we merge in the obvious way. The edge $t_1t_2$ is replaced with $t_1ut_2$, $w_1w_2$ with $w_1vw_3$, and $t_3w_3$ with $t_3uvw_3$) to obtain a path decomposition of $G$. By Proposition~\ref{prop:1v}, this yields a good path decomposition of $G$.
\item Now $G-\{u,v\}$ has at most three connected components, and each of $G-u$ and $G-v$ has at most two connected components. Let $T = \{t_1,t_2,t_3\}$ and $W = \{w_1,w_2,w_3\}$. We claim that we can relabel the vertices in $T$ and $W$ such that the graph $G-u-v + t_1t_2 + w_1w_2$ is connected and the properties that $t_1t_2, w_1w_2 \not \in E(G)$ and $t_3 \neq w_3$ are preserved. Indeed if $u$ and $v$ have a common neighbour, let $t \in T$ and $w \in W$ be such that $t=w$. Otherwise let $t=t_1$ and $w = w_1$. Suppose first that $t$ and $w$ lie in the same component of $G - u - v$.  Since $G-u-v$ has at most $3$ components, and $G - u $ and $G-v$ have at most $2$ components, there are non edges $tt'$ and $ww'$ for some $t' \in T$ and $w' \in W$ such that $G - u - v + tt' + ww'$ is connected. Furthermore, since $t$ and $w$ are the only possible common neighbours of $u$ and $v$, we have that the single vertices in $T \setminus \{t,t'\}$ and $W \setminus \{w,w'\}$ are not equal. Thus, letting $t_1 = t$, $t_2 = t'$, $w_1 = w$, $w_2 = w'$ and setting $t_3$ and $w_3$ to be the remaining vertices gives the desired relabeling.

 Suppose now that $t$ and $w$ lie in different components of $G-u-v$. In particular this implies that $T \cap W = \emptyset$. Again, since $G-u-v$ has at most $3$ components, and $G - u $ and $G-v$ have at most $2$ components, there are non-edges $e_T$ and $e_W$ amongst the vertices of $T$ and $W$ respectively, such that  $G - u - v + e_T + e_W$ is connected. We relabel the vertices in $T$ and $W$ such that $t_1$ and $t_2$ are the endpoints of $e_T$, $w_1$ and $w_2$ are the endpoints of $e_W$, and $t_3$ and $w_3$ are the remaining vertices. Since $T \cap W = \emptyset$, we have that $t_3 \neq w_3$ are required.

Let $G'$ be the graph obtained from $G-\{u,v\}$ by adding the edges $t_1t_2$ and $w_1w_2$. By the argument above, $G'$ is connected, and so by the minimality of $G$, there is a good path decomposition of $G'$. We obtain a path decomposition of $G$ by replacing $t_1t_2$ with $t_1ut_2$ and $w_1w_2$ with $w_1vw_2$, and adding the path $t_3uvw_3$. Note that since $t_3 \neq w_3$ the latter is really a path. By Proposition~\ref{prop:2v}, and since we add at most one new path, this yields a good path decomposition of $G$.

 \end{enumerate}
    
    \begin{figure}[!ht]
        \centering
        \begin{tikzpicture}[scale=1,auto]
\tikzstyle{whitenode}=[draw,circle,fill=white,minimum size=6pt,inner sep=0pt]
\tikzstyle{blacknode}=[draw,circle,fill=black,minimum size=6pt,inner sep=0pt]
\tikzstyle{ghost}=[draw=white,circle,minimum size=0pt,inner sep=0pt]
\draw (0,0) node[whitenode] (a1) {}
--++(0:1) node[blacknode] (a2) {}
--++(0:1) node[blacknode] (a3) {}
--++(0:1) node[whitenode] (a4) {};
\draw(a2)
--++(-90:1) node[whitenode] (a2a) {};
\draw(a2)
--++(90:1) node[whitenode] (a2b) {};
\draw(a3)
--++(-90:1) node[whitenode] (a3a) {};
\draw(a3)
--++(90:1) node[whitenode] (a3b) {};
\draw[dashed] (a1) edge node {} (a2b);
\draw[dashed] (a4) edge node {} (a3b);

\draw (3.5,0.4) node[ghost] (a1) {$\leadsto$};
\draw (4,0) node[whitenode] (a1) {}
++(0:1) node[ghost] (a2) {}
++(0:1) node[ghost] (a3) {}
++(0:1) node[whitenode] (a4) {};
\draw(a2)
++(-90:1) node[whitenode] (a2a) {};
\draw(a2)
++(90:1) node[whitenode] (a2b) {};
\draw(a3)
++(-90:1) node[whitenode] (a3a) {};
\draw(a3)
++(90:1) node[whitenode] (a3b) {};
\draw[red] (a1) edge node {$P$} (a2b);
\draw[green] (a3b) edge node {$Q$} (a4);

\draw (7.5,0.4) node[ghost] (a1) {$\leadsto$};
\draw (8,0) node[whitenode] (a1) {}
++(0:1) node[blacknode] (a2) {}
++(0:1) node[blacknode] (a3) {}
++(0:1) node[whitenode] (a4) {};
\draw(a2)
++(-90:1) node[whitenode] (a2a) {};
\draw(a2)
++(90:1) node[whitenode] (a2b) {};
\draw(a3)
++(-90:1) node[whitenode] (a3a) {};
\draw(a3)
++(90:1) node[whitenode] (a3b) {};
\draw[dashed] (a1) edge node {} (a2b);
\draw[dashed] (a4) edge node {} (a3b);
\draw[red] (a1) edge node {$P$} (a2);
\draw[red] (a2) edge node {$P$} (a2b);
\draw[green] (a3) edge node {$Q$} (a4);
\draw[green] (a3b) edge node {$Q$} (a3);
\draw[blue] (a2a) edge node {$R$} (a2);
\draw[blue] (a3) edge node {$R$} (a2);
\draw[blue] (a3) edge node {$R$} (a3a);
\end{tikzpicture}
        \caption{The reduction of $C_4$ in the connected case.}
        \label{fig:44:tr}
    \end{figure}
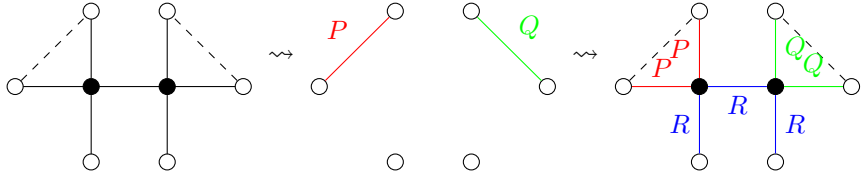
\end{proofclaim}

\begin{claim}\label{cl:tr}
$G$ does not contain the configuration $C_5$. 
\end{claim}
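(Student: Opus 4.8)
The plan is to argue exactly as in the previous claims: assuming for contradiction that $G$ contains Configuration $C_5$, I would delete two vertices to obtain a smaller connected graph, invoke the minimality of $G$ to decompose it, and then reinsert the deleted vertices so that at most one new path is created, contradicting the assumption via Proposition~\ref{prop:2v}. Throughout I may assume that $G$ contains none of $C_1,\dots,C_4$, since those claims have already been proven for $G$. Write the triangle as $uvw$ with $d(u)=4$, let $a,b$ be the two neighbours of $u$ outside the triangle, and observe that $u,v,w$ all have \emph{even} degree, so the triangle lies in $G_E$; this is exactly the kind of short cycle in $G_E$ that the lemma is designed to forbid. I would then split into cases according to the degrees of $v$ and $w$.

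\emph{Case $d(v)=d(w)=2$.} Here the triangle is attached to the rest of $G$ only through the edges $ua$ and $ub$. I delete $v$ and $w$ and set $G'=G-v-w$, so that $u$ has degree $2$ in $G'$ and $G'$ is connected; by minimality $G'$ has a good path decomposition $\mathcal{D}'$. The edges $ua,ub$ either already make $u$ an endpoint of two distinct paths of $\mathcal{D}'$, or lie on a common path, in which case I split that path at $u$ (creating one new path) so that again $u$ is an endpoint of two paths. I then extend one of these path-ends by $uvw$ (covering $uv$ and $vw$) and the other by $uw$ (covering $uw$), which reinserts the whole triangle while creating at most one new path; both extensions are legitimate because $v,w$ are new vertices. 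Proposition~\ref{prop:2v} then yields a good decomposition of $G$, a contradiction.

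\emph{Cases $d(v)=2,\ d(w)=4$ and $d(v)=d(w)=4$.} In each of these I would delete $u$ together with one triangle neighbour --- a degree-$2$ neighbour whenever one is available (so that no neighbour of the triangle becomes isolated) --- add one or two auxiliary edges pairing up the freed external neighbours (for instance $ab$, together with an edge joining the two external neighbours of the deleted degree-$4$ vertex) so that the resulting graph is connected and simple, and take a good decomposition of it by minimality. Reinserting the two deleted vertices is then carried out by replacing each auxiliary edge $xy$ with a path $x$--(deleted vertex)--$y$, which costs no extra path, while the remaining triangle edges are absorbed by extending or merging the paths meeting at the surviving triangle neighbour. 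The mixed-degree case is the lighter of the two, since the degree-$2$ vertex contributes only two edges and the reinsertion can be organised to create at most one new path.

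The hard part will be the case $d(v)=d(w)=4$. A triangle requires at least two paths to be covered, whereas deleting two vertices only buys a budget of one extra path, so a naive reduction overshoots by exactly one; moreover the shared vertex $w$ obstructs the $C_4$-style device of routing a single path $t_3uvw_3$ through $u$ and $v$, since such a path would revisit $w$. To stay within budget I would ensure that at least one of $u,v,w$ is left as a path endpoint in the smaller decomposition, so that one triangle edge is absorbed by an extension rather than by a fresh path; this forces a secondary case distinction according to whether the relevant neighbours are interior to their paths or are endpoints, splitting a path when they are interior. Here the hypothesis that $G$ contains neither $C_3$ nor $C_4$ is essential: applied to the $(4,4)$-edges $uv$, $uw$ and $vw$, it constrains the adjacencies among $\{a,b\}$, the external neighbours of $v$ and $w$, and $w$ itself, pinning down the few rigid configurations that survive and allowing the auxiliary edges to be chosen so that the reinsertion creates at most one path. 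I expect the verification of connectivity of the reduced graph and the checking of these positional subcases to be the most delicate and lengthy part of the argument.
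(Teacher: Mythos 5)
Your plan reproduces the paper's strategy only in the easiest subcase ($d(v)=d(w)=2$), and the places where you defer to "delicate and lengthy" verification are exactly where the paper's actual ideas live; as written, the proposal has genuine gaps that your scheme cannot close. The first missing piece: you never address the possibility that two triangle vertices, say $u$ and $v$, have \emph{three} common neighbours. Freeness from $C_3$ excludes only \emph{exactly two} common neighbours, and freeness from $C_4$ excludes three common neighbours only when at least two pairs among them are non-adjacent; the configuration $N(u)\setminus\{v\}=N(v)\setminus\{u\}=\{w,x,y\}$ with at most one non-edge among $w,x,y$ survives both hypotheses. In the extreme (near-$K_5$) case where $w,x,y$ induce a triangle, your generic reduction breaks outright: there are no "freed external neighbours" to pair up, every candidate auxiliary edge already exists in $G-u-v$, and the seven edges incident to $\{u,v\}$ cannot be covered by fewer than two added paths (no single path on five vertices has seven edges), which exceeds the budget of one that Proposition~\ref{prop:2v} allows. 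The paper handles this with a bespoke surgery that your proposal contains no trace of: it takes the paths of $\mathcal{D}'$ through $xw$, $xy$ and $yw$ and reroutes and splits them into four paths absorbing all seven new edges, i.e.\ it modifies existing paths instead of adding auxiliary edges. The same missing common-neighbour analysis also undermines your mixed case: you assume an auxiliary edge such as $ab$ can be added, but you never rule out $ab\in E(G)$ or collisions between the external neighbours of $u$, $v$ and $w$.

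Second, in the case $d(u)=d(v)=d(w)=4$, your plan to "ensure that at least one of $u,v,w$ is left as a path endpoint in the smaller decomposition" is not something you can ensure: the good decomposition of the reduced graph is handed to you by minimality, and you have no control over where its endpoints lie. The paper manufactures a guaranteed endpoint by \emph{contraction} rather than deletion: contracting $vw$ to a vertex $s$ (and adding $sx_2$) makes $d(s)=5$ odd, so parity forces some path of $\mathcal{D}'$ to end at $s$. Your deletion-plus-auxiliary-edges reduction leaves the relevant vertices of even degree, so no such parity argument is available. Moreover, the paper's case split here is along a dichotomy you do not anticipate: whether some edge leaving $\{u,v,w\}$ fails to be a cut-edge. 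When \emph{every} such edge is a cut-edge, no connected reduced graph of the kind you describe exists at all; $G-\{u,v,w\}$ plus the auxiliary edges $x_1y_1, x_2y_2, z_1z_2$ has three components, and goodness follows not from Propositions~\ref{prop:2v}--\ref{prop:1v} but from the separate count $\frac{|V(G_1)|+1}{2}+\frac{|V(G_2)|+1}{2}+\frac{|V(G_3)|+1}{2}=\frac{|V(G)|}{2}$. These are the core ideas of the paper's proof of this claim, not routine bookkeeping, and the proposal does not supply them.
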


\begin{proofclaim}
We first consider the case where a pair in $\{u,v,w\}$, say $\{u,v\}$, has three common neighbors. Let $x$ and $y$ be the two neighbors of $\{u,v\}$ besides $w$. We argue that $wxy$ induces a triangle. Indeed, first assume there are at least two edges missing, say $xw, wy \not\in E(G)$. Consider the graph $G''=G+xw+wy$, note that it is connected, and consider a good path decomposition of it. We obtain a path decomposition of $G$ by replacing the edge $xw$ with $xuw$, replacing the edge $wy$ with $wvy$, and adding the path $xvuy$, see Figure~\ref{fig:tr1}. By Proposition~\ref{prop:2v}, this yields a good path decomposition of $G$.

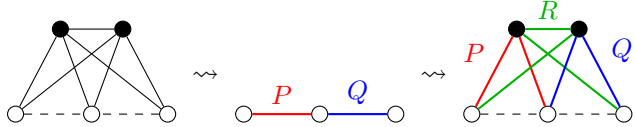
\begin{figure}[!ht]
    \centering
\begin{tikzpicture}[scale=1,auto]
\tikzstyle{whitenode}=[draw,circle,fill=white,minimum size=6pt,inner sep=0pt]
\tikzstyle{blacknode}=[draw,circle,fill=black,minimum size=6pt,inner sep=0pt]
\tikzstyle{ghost}=[draw=white,circle,minimum size=0pt,inner sep=0pt]
\draw (0,0) node[whitenode] (a) {};
\draw (a)
++(0:1) node[whitenode] (b) {}
++(0:1) node[whitenode] (c) {};
\draw (b)
++(70:1.2) node[blacknode] (d1) {};
\draw (b)
++(70+40:1.2) node[blacknode] (d2) {};

\draw[dashed] (a) edge node {} (b);
\draw[dashed] (c) edge node {} (b);
\draw (a) edge node {} (d1);
\draw (b) edge node {} (d1);
\draw (c) edge node {} (d1);
\draw (a) edge node {} (d2);
\draw (b) edge node {} (d2);
\draw (c) edge node {} (d2);
\draw (d1) edge node {} (d2);

\draw (2.5,0.5) node[ghost] (a1) {$\leadsto$};

\draw (3,0) node[whitenode] (a) {};
\draw (a)
++(0:1) node[whitenode] (b) {}
++(0:1) node[whitenode] (c) {};

\draw[thick,red] (a) edge node {$P$} (b);
\draw[thick,blue] (b) edge node {$Q$} (c);

\draw (5.5,0.5) node[ghost] (a1) {$\leadsto$};

\draw (6,0) node[whitenode] (a) {};
\draw (a)
++(0:1) node[whitenode] (b) {}
++(0:1) node[whitenode] (c) {};
\draw (b)
++(70:1.2) node[blacknode] (d1) {};
\draw (b)
++(70+40:1.2) node[blacknode] (d2) {};

\draw[dashed] (a) edge node {} (b);
\draw[dashed] (c) edge node {} (b);
\draw[thick,red] (a) edge node {$P$} (d2);
\draw[thick,red] (b) edge node {} (d2);
\draw[thick,green!70!black] (c) edge node {} (d2);
\draw[thick,green!70!black] (a) edge node {} (d1);
\draw[thick,blue] (b) edge node {} (d1);
\draw[thick,blue] (d1) edge node {$Q$} (c);
\draw[thick,green!70!black] (d2) edge node {$R$} (d1);

\end{tikzpicture}
    \caption{The reduction of $C_5$ when $u$ and $v$ have three common neighbors that induce at least two non-edges.}
    \label{fig:tr1}
\end{figure}

Assume now that there is precisely one edge missing, say the edge $xy$. Consider $G'$, the graph obtained from $G-\{u,v\}$ by adding the edge $xy$. If $G'$ is connected, then by the minimality of $G$, it has a good path decomposition. From this, we obtain a path decomposition of $G$ by replacing the edge $xy$ with $xuvy$ and adding the path $xvwuy$, see Figure~\ref{fig:tr0}. By Proposition~\ref{prop:2v}, this yields a good path decomposition of $G$. 

\begin{figure}[!ht]
    \centering
\begin{tikzpicture}[scale=1,auto]
\tikzstyle{whitenode}=[draw,circle,fill=white,minimum size=6pt,inner sep=0pt]
\tikzstyle{blacknode}=[draw,circle,fill=black,minimum size=6pt,inner sep=0pt]
\tikzstyle{ghost}=[draw=white,circle,minimum size=0pt,inner sep=0pt]
\draw (0,0) node[whitenode] (a) {};
\draw (a)
++(0:1) node[whitenode] (b) {}
++(0:1) node[whitenode] (c) {};
\draw (b)
++(70:1.2) node[blacknode] (d1) {};
\draw (b)
++(70+40:1.2) node[blacknode] (d2) {};

\draw[dashed] (a) edge node {} (b);
\draw (c) edge node {} (b);
\draw[bend right] (a) edge node {} (c);
\draw (a) edge node {} (d1);
\draw (b) edge node {} (d1);
\draw (c) edge node {} (d1);
\draw (a) edge node {} (d2);
\draw (b) edge node {} (d2);
\draw (c) edge node {} (d2);
\draw (d1) edge node {} (d2);

\draw (2.5,0.5) node[ghost] (a1) {$\leadsto$};

\draw (3,0) node[whitenode] (a) {};
\draw (a)
++(0:1) node[whitenode] (b) {}
++(0:1) node[whitenode] (c) {};

\draw[thick,red] (a) edge node {$P$} (b);
\draw (c) edge node {} (b);
\draw[bend right] (a) edge node {} (c);

\draw (5.5,0.5) node[ghost] (a1) {$\leadsto$};

\draw (6,0) node[whitenode] (a) {};
\draw (a)
++(0:1) node[whitenode] (b) {}
++(0:1) node[whitenode] (c) {};
\draw (b)
++(70:1.2) node[blacknode] (d1) {};
\draw (b)
++(70+40:1.2) node[blacknode] (d2) {};

\draw[dashed] (a) edge node {} (b);
\draw[thick,red] (a) edge node {$P$} (d2);
\draw[thick,red] (d1) edge node {} (d2);
\draw[thick,red] (b) edge node {} (d1);
\draw[thick,green!70!black] (c) edge node {} (d2);
\draw[thick,green!70!black] (d1) edge node {$Q$} (c);
\draw[thick,green!70!black] (d1) edge node {} (a);
\draw[thick,green!70!black] (d2) edge node {} (b);
\draw (c) edge node {} (b);
\draw[bend right] (a) edge node {} (c);

\end{tikzpicture}
    \caption{The reduction of $C_5$ when $u$ and $v$ have three common neighbors that induce precisely one non-edge.}
    \label{fig:tr0}
\end{figure}
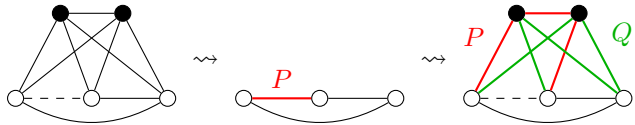

Therefore $x,y$ and $w$ induce a triangle. Let $G'=G-\{u,v\}$, and note that $G'$ is connected. Thus, by the minimality of $G$, the graph $G'$ admits a good path decomposition $\mathcal{D}'$. We obtain a path decomposition of $G$ as follows: First assume without loss of generality that $xy$ and $wy$ do not belong to the same path of $\mathcal{D}'$. Let $Q'$ be the path of $\mathcal{D}'$ containing the edge $xw$, and $Q=Q' - xw + xu$. We consider $\mathcal{D}''= \mathcal{D}'-Q'+Q$. Let $P'$ be the path of $\mathcal{D}''$ containing the edge $xy$. We write $P'=P'_1+xy+P'_2$, where $P'_1$ and $P'_2$ may be empty paths. Set $P_1=P'_1+xyvw$ and $P_2=P'_2+yuvxw$. Note that $P_1$ and $P_2$ are paths even if $P'_1$ also contains the edge $xu$ or in other words $P'_1 = Q$. Finally, let $R'$ be the path of $\mathcal{D}''$ containing the edge $yw$, and set $R=R'+wu$. We note that $\mathcal{D} = \mathcal{D}''-P'-R' +P_1+P_2+R$ is a path decomposition of $G$, with precisely one more path than $\mathcal{D}'$, see Figure~\ref{fig:tr2}. Thus $\mathcal{D}$ is a good path decomposition by Proposition~\ref{prop:2v}, a contradiction. Therefore no pair in $\{u,v,w\}$ has three common neighbors.

\begin{figure}[!ht]
    \centering
\begin{tikzpicture}[scale=0.9,auto]
\tikzstyle{whitenode}=[draw,circle,fill=white,minimum size=6pt,inner sep=0pt]
\tikzstyle{blacknode}=[draw,circle,fill=black,minimum size=6pt,inner sep=0pt]
\tikzstyle{ghost}=[draw=white,circle,minimum size=0pt,inner sep=0pt]
\draw (0,0) node[whitenode] (a) {};
\draw (a)
++(0:3.5) node[whitenode] (b) {};
\draw (a)
++(60:3.5) node[blacknode] (c) {};
\draw (c)
++(-70:1.75) node[blacknode] (d1) {};
\draw (c)
++(-110:1.75) node[blacknode] (d2) {};

\draw (a) edge node {} (b);
\draw[bend left] (a) edge node {} (c);
\draw[bend left] (c) edge node {} (b);
\draw (a) edge node {} (d1);
\draw (a) edge node {} (d2);
\draw (b) edge node {} (d1);
\draw (b) edge node {} (d2);
\draw (c) edge node {} (d1);
\draw (c) edge node {} (d2);
\draw (d1) edge node {} (d2);

\draw (4,1.5) node[ghost] (a1) {$\leadsto$};

\draw (4.5,0) node[whitenode] (a) {};
\draw (a)
++(0:3.5) node[whitenode] (b) {};
\draw (a)
++(60:3.5) node[blacknode] (c) {};

\draw[blue,thick] (a) edge node {$P'$} (b);
\draw[red,bend left] (a) edge node {$Q'$} (c);
\draw[orange,bend left] (c) edge node {$R'$} (b);

\draw (8.5,1.5) node[ghost] (a1) {$\leadsto$};

\draw (9,0) node[whitenode] (a) {};
\draw (a)
++(0:3.5) node[whitenode] (b) {};
\draw (a)
++(60:3.5) node[blacknode] (c) {};
\draw (c)
++(-70:1.75) node[blacknode] (d1) {};
\draw (c)
++(-110:1.75) node[blacknode] (d2) {};

\draw[blue!50!red,thick] (a) edge node {$P_1$} (b);
\draw[red,thick] (a) edge node {$Q$} (d2);
\draw[bend left,green!60!blue,thick] (a) edge node {$P_2$} (c);
\draw[bend left,orange,thick] (c) edge node {$R$} (b);
\draw[blue!50!red,thick] (d1) edge node {} (b);
\draw[green!60!blue,thick] (b) edge node {} (d2);
\draw[green!60!blue,thick] (a) edge node {} (d1);
\draw[orange,thick] (c) edge node {} (d2);
\draw[blue!50!red,thick] (d1) edge node {} (c);
\draw[green!60!blue,thick] (d1) edge node {} (d2);

\end{tikzpicture}
    \caption{The reduction of $C_5$ when $u$ and $v$ have three common neighbors that induce a triangle. We assume $P'$ and $R'$ are distinct, though $Q'$ might be the same as $R'$ or $P'$ or be altogether distinct from both.}
    \label{fig:tr2}
\end{figure}
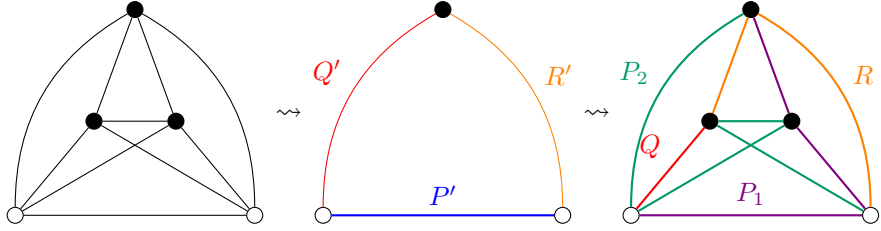

Now, since $d(u), d(v), d(w) \leq 4$ and by Claim~\ref{cl:diam}, we conclude that no pair of vertices in $\{u,v,w\}$ has a common neighbor other than the third vertex. If they exist, let $\{x_1,x_2\}$, $\{y_1,y_2\}$ and $\{z_1,z_2\}$ be the two other neighbors of $u$, $v$ and $w$ respectively. We consider three cases.
    \begin{enumerate}
        \item \emph{Assume first that one of $v$ and $w$ has degree $2$}, say $d(v)=2$. Let $G'$ be the graph obtained from $G-v$ by contracting the edge $uw$. Note that $G'$ is connected and $|V(G')| = |V(G)|-2$. By the minimality of $G$, there is a good path decomposition $\mathcal{D}'$ of $G'$. To obtain a path decomposition of $G$, we consider two cases depending on whether $ux_1$ and $ux_2$ belong to the same path in $\mathcal{D}'$, see Figures~\ref{fig:tr3a} and~\ref{fig:tr3b}. If they do not, then replace $ux_1$ with the path $wux_1$, and replace $ux_2$ with the path $wvux_2$. However, if $ux_1$ and $ux_2$ belong to the same path $P \in \mathcal{D}'$, then split $P$ at $u$ into two paths $P_1$ and $P_2$. Extend $P_1$ with the edge $uw$ and extend $P_2$ with the path $uvw$. Note that no edge incident to $w$ is in $P_1$ or $P_2$. By Proposition~\ref{prop:2v}, and since we created at most one new path, this yields a good path decomposition of $G$.

  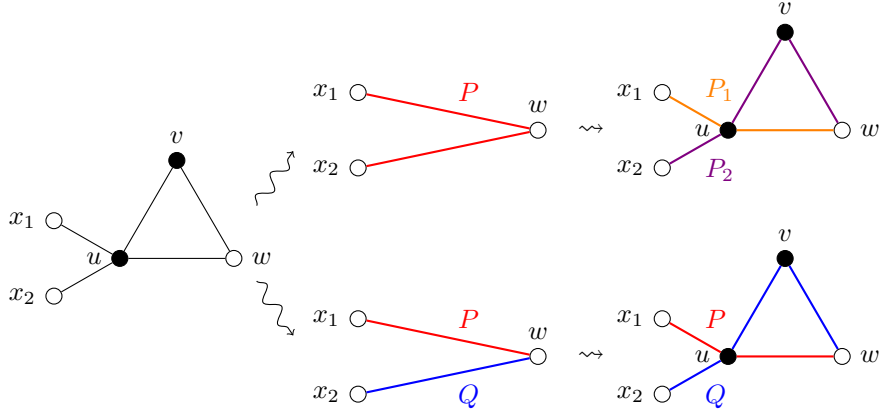
\begin{figure}[!ht]
\centering
\begin{tikzpicture}[scale=1,auto]
\tikzstyle{whitenode}=[draw,circle,fill=white,minimum size=6pt,inner sep=0pt]
\tikzstyle{blacknode}=[draw,circle,fill=black,minimum size=6pt,inner sep=0pt]
\tikzstyle{ghost}=[draw=white,circle,minimum size=0pt,inner sep=0pt]
\draw (0,-0.2) node[blacknode] (a) [label=180:$u$] {};
\draw (a)
++(0:1.5) node[whitenode] (b) [label=0:$w$] {};
\draw (a)
++(60:1.5) node[blacknode] (c) [label=90:$v$] {};
\draw (a)
++(180-30:1) node[whitenode] (d1) [label=180:$x_1$] {};
\draw (a)
++(180+30:1) node[whitenode] (d2) [label=180:$x_2$] {};

\draw (a) edge node {} (b);
\draw (a) edge node {} (c);
\draw (c) edge node {} (b);
\draw (a) edge node {} (d1);
\draw (a) edge node {} (d2);

\draw[style={decorate, decoration=snake},post] (1.8,0.5) -- (2.3,1.25);
\draw[style={decorate, decoration=snake},post] (1.8,-0.5) -- (2.3,-1.25);

\draw (4,1.5) node[ghost] (a) {};
\draw (a)
++(0:1.5) node[whitenode] (b) [label=90:$w$] {};
\draw (a)
++(180-30:1) node[whitenode] (d1) [label=180:$x_1$] {};
\draw (a)
++(180+30:1) node[whitenode] (d2) [label=180:$x_2$] {};

\draw[thick,red] (d1) edge node {$P$} (b);
\draw[thick,red] (b) edge node {} (d2);

\draw (6.2,1.5) node[ghost] (a1) {$\leadsto$};

\draw (8,1.5) node[blacknode] (a) [label=180:$u$] {};
\draw (a)
++(0:1.5) node[whitenode] (b) [label=0:$w$] {};
\draw (a)
++(60:1.5) node[blacknode] (c) [label=90:$v$] {};
\draw (a)
++(180-30:1) node[whitenode] (d1) [label=180:$x_1$] {};
\draw (a)
++(180+30:1) node[whitenode] (d2) [label=180:$x_2$] {};

\draw[orange,thick] (a) edge node {} (b);
\draw[red!50!blue,thick] (a) edge node {} (c);
\draw[red!50!blue,thick] (c) edge node {} (b);
\draw[orange,thick] (d1) edge node {$P_1$} (a);
\draw[red!50!blue,thick] (a) edge node {$P_2$} (d2);

\draw (4,-1.5) node[ghost] (a) {};
\draw (a)
++(0:1.5) node[whitenode] (b) [label=90:$w$] {};
\draw (a)
++(180-30:1) node[whitenode] (d1) [label=180:$x_1$] {};
\draw (a)
++(180+30:1) node[whitenode] (d2) [label=180:$x_2$] {};

\draw[thick,red] (d1) edge node {$P$} (b);
\draw[thick,blue] (b) edge node {$Q$} (d2);

\draw (6.2,-1.5) node[ghost] (a1) {$\leadsto$};

\draw (8,-1.5) node[blacknode] (a) [label=180:$u$] {};
\draw (a)
++(0:1.5) node[whitenode] (b) [label=0:$w$] {};
\draw (a)
++(60:1.5) node[blacknode] (c) [label=90:$v$] {};
\draw (a)
++(180-30:1) node[whitenode] (d1) [label=180:$x_1$] {};
\draw (a)
++(180+30:1) node[whitenode] (d2) [label=180:$x_2$] {};

\draw[red,thick] (a) edge node {} (b);
\draw[blue,thick] (a) edge node {} (c);
\draw[blue,thick] (c) edge node {} (b);
\draw[red,thick] (d1) edge node {$P$} (a);
\draw[blue,thick] (a) edge node {$Q$} (d2);

\end{tikzpicture}
    \caption{The reduction of $C_5$ when $u$ and $w$ have precisely one common neighbor and $d(v)=2$.}
    \label{fig:tr3a}
\end{figure}
        
        \item \emph{Assume that one of the edges $ux_1, ux_2, vy_1,vy_2, wz_1, wz_2$ is not a cut-edge}. Assume without loss of generality that $ux_1$ is such an edge. Let $G'$ be the graph obtained from $G-u$ by contracting the edge $vw$ to a vertex $s$, and adding the edge $sx_2$. Note that $G'$ is connected and $|V(G')| = |V(G)|-2$, so by the minimality of $G$, there is a good path decomposition $\mathcal{D}'$ of $G'$.

        We obtain a path decomposition of $G$ as follows. We first replace any subpath of the form $ysz$, $y \in \{y_1, y_2\}$, $z \in \{z_1,z_2\}$ with $yvwz$ (preferably) or with $yvuwz$ (if there are two such subpaths). We then replace any subpath of the form $x_2st$, $t \in \{y_1,y_2,z_1,z_2\}$, with $x_2urt$ where $r$ is the vertex of $\{v,w\}$ adjacent to $t$. We replace any remaining edge of the form $ts$, $t \in \{x_2,y_1,y_2,z_1,z_2\}$ with $tr$, where $r$ is the vertex of $\{u,v,w\}$ adjacent to $t$. Let $\mathcal{D}''$ be the resulting collection of disjoint paths in $G$. Note that since $d(s)=5$, there is a path $P$ in $\mathcal{D}'$ that ends in $s$, thus a path $P'$ in $\mathcal{D}''$ that ends in $r \in \{u,v,w\}$. We consider the set of edges of $G$ that do not belong to a path in $\mathcal{D}''$. If that set does not induce a path, then we extend $P'$ to $wu$ or $wv$. Note that this guarantees the only remaining edges induce a path $Q$, which we add to the path collection. By Proposition~\ref{prop:2v}, and since we added at most one new path, this yields a good path decomposition of $G$.
        
          \begin{figure}[!ht]
\centering
\begin{tikzpicture}[scale=1,auto]
\tikzstyle{whitenode}=[draw,circle,fill=white,minimum size=6pt,inner sep=0pt]
\tikzstyle{blacknode}=[draw,circle,fill=black,minimum size=6pt,inner sep=0pt]
\tikzstyle{ghost}=[draw=white,circle,minimum size=0pt,inner sep=0pt]
\draw (0,0) node[blacknode] (a) [label=180:$v$] {};
\draw (a)
++(0:1.5) node[blacknode] (b) [label=0:$u$] {};
\draw (a)
++(60:1.5) node[blacknode] (c) [label=90:$w$] {};
\draw (a)
++(180-30:1) node[whitenode] (d1) {};
\draw (a)
++(180+30:1) node[whitenode] (d2) {};
\draw (b)
++(-30:1) node[whitenode] (e1) {};
\draw (b)
++(30:1) node[whitenode] (e2) {};
\draw (c)
++(90-30:1) node[whitenode] (f1) {};
\draw (c)
++(90+30:1) node[whitenode] (f2) {};

\draw (a) edge node {} (b);
\draw (a) edge node {} (c);
\draw (c) edge node {} (b);
\draw (a) edge node {} (d1);
\draw (a) edge node {} (d2);
\draw (b) edge node {} (e1);
\draw (b) edge node {} (e2);
\draw (c) edge node {} (f1);
\draw (c) edge node {} (f2);

\draw[style={decorate, decoration=snake},post] (2.3,0) -- (2.8,0);

\draw (4,0) node[blacknode] (a) {};
\draw (a)
++(0:1.5) node[ghost] (b) {};
\draw (a)
++(60:1.5) node[ghost] (c) {};
\draw (a)
++(180-30:1) node[whitenode] (d1) {};
\draw (a)
++(180+30:1) node[whitenode] (d2) {};
\draw (b)
++(-30:1) node[whitenode] (e1) {};
\draw (b)
++(30:1) node[whitenode] (e2) {};
\draw (c)
++(90-30:1) node[whitenode] (f1) {};
\draw (c)
++(90+30:1) node[whitenode] (f2) {};

\draw[red,thick] (d1) edge node {} (a);
\draw[blue!20!white!70!black,very thick] (a) edge node {$P'$} (d2);
\draw[red,thick] (f1) edge node {$Q$} (a);
\draw[blue!80!black,thick] (a) edge node {} (f2);
\draw[orange,thick] (a) edge node {} (e2);

\draw[style={decorate, decoration=snake},post] (6.3,0) -- (6.8,0);

\draw (8,0) node[blacknode] (a) {};
\draw (a)
++(0:1.5) node[blacknode] (b) {};
\draw (a)
++(60:1.5) node[blacknode] (c) {};
\draw (a)
++(180-30:1) node[whitenode] (d1) {};
\draw (a)
++(180+30:1) node[whitenode] (d2) {};
\draw (b)
++(-30:1) node[whitenode] (e1) {};
\draw (b)
++(30:1) node[whitenode] (e2) {};
\draw (c)
++(90-30:1) node[whitenode] (f1) {};
\draw (c)
++(90+30:1) node[whitenode] (f2) {};

\draw[blue!20!white!70!black,very thick] (a) edge node {} (b);
\draw[red,thick] (a) edge node {} (c);
\draw[green!70!black,thick] (c) edge node {} (b);
\draw[red,thick] (a) edge node {} (d1);
\draw[blue!20!white!70!black,very thick] (a) edge node {$P'$} (d2);
\draw[green!70!black,thick] (b) edge node {} (e1);
\draw[orange,thick] (b) edge node {} (e2);
\draw[red,thick] (f1) edge node {$Q$} (c);
\draw[blue,thick] (c) edge node {} (f2);

\end{tikzpicture}
    \caption{An example of the reduction of $C_5$ when $d(u)=d(v)=d(w)=4$, the triangle $(u,v,w)$ is adjacent to no other triangle and some edge in $E(\{u,v,w\},\{x_1,x_2,y_1,y_2,z_1,z_2\})$ is not a cut-edge.}
    \label{fig:tr3b}
\end{figure}
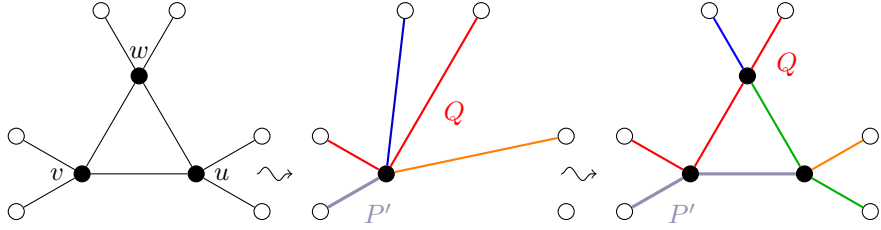
        
       \item Now $d(u)=d(v)=d(w)=4$ and every edge with precisely one endpoint in $\{u,v,w\}$ is a cut-edge. Consider the graph $G'$ obtained from $G-\{u,v,w\}$ by adding the three edges $x_1y_1$, $x_2y_2$ and $z_1z_2$. Note that $G'$ has precisely three connected components $G_1, G_2$, and $G_3$. By the minimality of $G$, there are good path decompositions of $G_1$, $G_2$ and $G_3$. We obtain a path decomposition of $G$ by replacing $x_1y_1$ with the path $x_1uvy_1$, replacing $x_2y_2$ with $x_2uwvy_2$, and replacing $z_1z_2$ with the path $z_1wz_2$ (see Figure~\ref{fig:tr4}). These paths are all distinct since the edges $x_1y_1$, $x_2y_2$ and $z_1z_2$ belong to different components of $G'$. Note that the total number of paths involved in the resulting path decomposition of $G$ is at most $\frac{|V(G_1)|+1}2+\frac{|V(G_2)|+1}2+\frac{|V(G_3)|+1}2=\frac{|V(G)|}2$, thus it is a good path decomposition.
       
       \begin{figure}[!ht]
\centering
\begin{tikzpicture}[scale=1,auto]
\tikzstyle{whitenode}=[draw,circle,fill=white,minimum size=6pt,inner sep=0pt]
\tikzstyle{blacknode}=[draw,circle,fill=black,minimum size=6pt,inner sep=0pt]
\tikzstyle{ghost}=[draw=white,circle,minimum size=0pt,inner sep=0pt]
\draw (0,0) node[blacknode] (a) [label=180:$v$] {};
\draw (a)
++(0:1.5) node[blacknode] (b) [label=0:$u$] {};
\draw (a)
++(60:1.5) node[blacknode] (c) [label=90:$w$] {};
\draw (a)
++(180-30:1) node[whitenode] (d1) {};
\draw (a)
++(180+30:1) node[whitenode] (d2) {};
\draw (b)
++(-30:1) node[whitenode] (e1) {};
\draw (b)
++(30:1) node[whitenode] (e2) {};
\draw (c)
++(90-30:1) node[whitenode] (f1) {};
\draw (c)
++(90+30:1) node[whitenode] (f2) {};

\draw (a) edge node {} (b);
\draw (a) edge node {} (c);
\draw (c) edge node {} (b);
\draw (a) edge node {} (d1);
\draw (a) edge node {} (d2);
\draw (b) edge node {} (e1);
\draw (b) edge node {} (e2);
\draw (c) edge node {} (f1);
\draw (c) edge node {} (f2);

\draw[style={decorate, decoration=snake},post] (2.3,0) -- (2.8,0);

\draw (4,0) node[ghost] (a) {};
\draw (a)
++(0:1.5) node[ghost] (b) {};
\draw (a)
++(60:1.5) node[ghost] (c) {};
\draw (a)
++(180-30:1) node[whitenode] (d1) {};
\draw (a)
++(180+30:1) node[whitenode] (d2) {};
\draw (b)
++(-30:1) node[whitenode] (e1) {};
\draw (b)
++(30:1) node[whitenode] (e2) {};
\draw (c)
++(90-30:1) node[whitenode] (f1) {};
\draw (c)
++(90+30:1) node[whitenode] (f2) {};

\draw[red,thick] (d1) edge node {} (f2);
\draw[blue,thick] (d2) edge node {} (f1);
\draw[green!80!black,thick,bend left] (e1) edge node {} (e2);

\draw[style={decorate, decoration=snake},post] (6.5,0) -- (6.99,0);

\draw (8,0) node[blacknode] (a) {};
\draw (a)
++(0:1.5) node[blacknode] (b) {};
\draw (a)
++(60:1.5) node[blacknode] (c) {};
\draw (a)
++(180-30:1) node[whitenode] (d1) {};
\draw (a)
++(180+30:1) node[whitenode] (d2) {};
\draw (b)
++(-30:1) node[whitenode] (e1) {};
\draw (b)
++(30:1) node[whitenode] (e2) {};
\draw (c)
++(90-30:1) node[whitenode] (f1) {};
\draw (c)
++(90+30:1) node[whitenode] (f2) {};

\draw[red,thick] (d1) edge node {} (a);
\draw[red,thick] (a) edge node {} (c);
\draw[red,thick] (c) edge node {} (f2);
\draw[blue,thick] (d2) edge node {} (a);
\draw[blue,thick] (a) edge node {} (b);
\draw[blue,thick] (b) edge node {} (c);
\draw[blue,thick] (f1) edge node {} (c);
\draw[green!80!black,thick] (e1) edge node {} (b);
\draw[green!80!black,thick] (b) edge node {} (e2);

\end{tikzpicture}
    \caption{The reduction of $C_5$ when $d(u)=d(v)=d(w)=4$, the triangle $(u,v,w)$ is adjacent to no other triangle and every edge in $E(\{u,v,w\},\{x_1,x_2,y_1,y_2,z_1,z_2\})$ is a cut-edge.}
    \label{fig:tr4}
\end{figure}
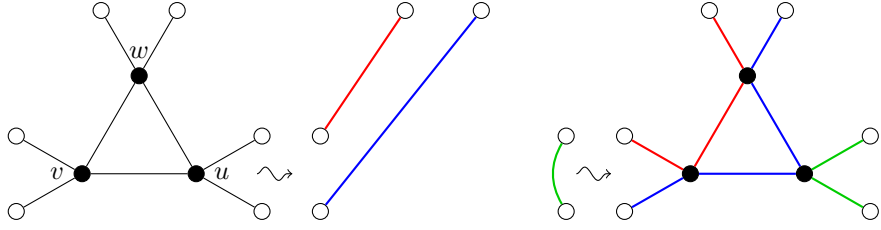
        
    \end{enumerate}
\end{proofclaim}

By Claims~\ref{cl:v2},~\ref{cl:cutEdge},~\ref{cl:diam},~\ref{cl:44} and \ref{cl:tr}, the lemma statement holds.
\end{proof}

Recall that $G_E$ denotes the graph induced on the vertices of even degree in $G$.

\begin{lemma}\label{lem:Structure}
Let $G$ be a connected graph such that $G \not\in \{K_3, K_5\}$. If $\Delta(G) \leq 5$ and $G$ does not contain configurations $C_1, \dots, C_5$, then the graph $G_E$ is a forest.
\end{lemma}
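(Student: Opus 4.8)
The plan is to argue by contradiction: assume $G_E$ contains a cycle and let $C = v_1 v_2 \cdots v_k v_1$ be a \emph{shortest} cycle of $G_E$. Working with a shortest cycle is what makes the local analysis tractable, because it forces $C$ to be chordless: any chord joins two vertices of even degree, hence is itself an edge of $G_E$, and it would split $C$ into two strictly shorter cycles of $G_E$. Thus $C$ is induced in $G_E$, and in particular there are no edges of $G$ among its vertices other than the cycle edges.

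First I would dispose of the short case $k=3$. Then $C$ is a triangle whose three vertices have even degree, i.e.\ degree $2$ or $4$; if any had degree $4$ this triangle would be configuration $C_5$ (from Lemma~\ref{lem:ReducibleConfigs}), so all three have degree $2$. But a degree-$2$ vertex of $C$ has no neighbour off $C$, so the triangle is an entire component and $G=K_3$, which is excluded by hypothesis. For $k\ge 4$ I use chordlessness: a vertex $v_i$ of degree $2$ would have $N(v_i)=\{v_{i-1},v_{i+1}\}$, and absence of $C_1$ would make $v_{i-1}v_{i+1}$ an edge, i.e.\ a chord. Hence \emph{every} vertex of $C$ has degree exactly $4$.

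The heart of the argument is the local analysis at a single cycle edge $uv=v_iv_{i+1}$, with $T=N(u)\setminus\{v\}$ and $W=N(v)\setminus\{u\}$ the remaining neighbourhoods (each of size $3$, containing $v_{i-1}$ and $v_{i+2}$ respectively). Since $d(u)=d(v)=4$, absence of $C_3$ gives $|T\cap W|\neq 2$, and chordlessness rules out $|T\cap W|=3$ (that would put $v_{i-1}\in W$, forcing the chord $v_{i-1}v_{i+1}$). Moreover any common neighbour $c$ of $u,v$ yields a triangle $uvc$ with $d(u)=d(v)=4$, so absence of $C_5$ forces $d(c)$ odd; in particular $c$ lies off $C$. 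It then remains to exploit absence of $C_4$ when $|T\cap W|\in\{0,1\}$. The key point is that a non-edge inside $T$ together with a non-edge inside $W$ realises $C_4$ \emph{unless} the two leftover vertices $t_3,w_3$ coincide; when $T\cap W=\emptyset$ the leftovers are automatically distinct, and in general the only possible common leftover is a vertex of $T\cap W$. Unwinding this, absence of $C_4$ forces one of: $T$ induces a triangle, $W$ induces a triangle, or $|T\cap W|=1$ with the common neighbour $c$ adjacent to all of $(T\cup W)\setminus\{c\}$. In the last case $c$ has the neighbours $u,v$ together with four further distinct vertices, so $d(c)\ge 6$, contradicting $\Delta(G)\le 5$. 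If instead $T$ (say) induces a triangle $\{v_{i-1},a,a'\}$, then $v_{i-1}$ is adjacent to $a,a'$ and (being a degree-$4$ cycle vertex) has neighbourhood exactly $\{v_{i-2},u,a,a'\}$; comparing with $N(u)=\{v_{i+1},v_{i-1},a,a'\}$ shows the adjacent cycle edge $v_{i-1}u$ has precisely the two common neighbours $a,a'$, which is configuration $C_3$ — the final contradiction.

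I expect the main obstacle to be this last step. The statement of $C_4$ is quantified over labelings of $T$ and $W$, so the real work is converting ``$G$ avoids $C_4$'' into the clean trichotomy above, and then, in the triangle branch, verifying that the forced adjacencies make a \emph{neighbouring} cycle edge carry exactly two common neighbours. Throughout, one must check that the vertices produced ($c$, the extra neighbours of $u$ and $v$, and the cycle vertices $v_{i-2},v_{i-1},v_{i+1},v_{i+2}$) are genuinely distinct, which is exactly where chordlessness and the bound $\Delta(G)\le 5$ are used repeatedly. The remaining configuration $C_2$ (an even–even cut-edge) does not seem essential to the shortest-cycle route taken here, but it is the natural tool for handling degenerate disconnections in a variant argument that deletes cycle vertices rather than inspecting adjacent edges.
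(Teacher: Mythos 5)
Your proof is correct, and although it shares the paper's overall skeleton (every cycle vertex has degree $4$; a cycle edge $uv$ has at most one common neighbour; absence of $C_4$ forces a clique on one side; $C_3$ kills the clique; a shared neighbour leads to a degree-at-least-$6$ contradiction), the execution differs at the key points. The paper works with an \emph{arbitrary} cycle of $G_E$ and never uses chordlessness: degree-$2$ cycle vertices are excluded via $C_1$ together with $C_5$ (their neighbours must be adjacent, creating a triangle), three common neighbours are excluded via $C_5$ (the cycle neighbour of $u$ has degree $4$), and in the clique case the paper locates $C_3$ on an edge from $v$ into the clique $\{w_1,w_2,w_3\}$ --- which is why it must first establish $T\cap W=\emptyset$ (from the forced edges $t_1t_2,t_1t_3,w_1w_2,w_1w_3$ and $\Delta(G)\le 5$), since otherwise $u$ could be adjacent to the degree-$4$ clique vertex and the common-neighbour count would be $3$ rather than exactly $2$. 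Your shortest-cycle device replaces all of this: chordlessness pins down $N(v_{i-1})$ exactly, so you can place $C_3$ on the \emph{adjacent cycle edge} $uv_{i-1}$ without ever separating $T$ from $W$, and your trichotomy absorbs the $|T\cap W|=1$ case directly into the $d(c)\ge 6$ contradiction. What each buys: your route is locally cleaner and sidesteps a point the paper glosses over (in the paper's first clique argument the degree-$4$ clique vertex may coincide with $t_1$ and be adjacent to $u$, in which case the contradiction is really $C_5$, not $C_3$ as stated); the paper's route needs no minimality assumption on the cycle. Your closing remark about $C_2$ also matches the paper: its proof of this lemma invokes only $C_1$, $C_3$, $C_4$ and $C_5$, with $C_2$ used only inside the proof of the reducibility lemma itself.
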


\begin{proof}
Let $H$ = $G_E$ and suppose for a contradiction that $H$ contains a cycle $C$. Suppose further that there is $v \in V(C)$ with $d(v)=2$, and let $N(v) = \{u,w\}$. Since $C$ is a cycle in $H$, we have that $d(u), d(w) \in \{2,4\}$. Furthermore, since $G$ does not contain configuration $C_1$, we have that $uw \in E(G)$. Now $G \neq K_3$, so at least one of $u$ and $w$ has degree $4$. It follows that $u,v$ and $w$ form configuration $C_5$, a contradiction. Thus, if $C$ is a cycle in $H$, then $d_G(v) = 4$ for all vertices $v \in V(C)$. Since $G$ does not contain configuration $C_5$, it immediately follows that $|C| > 3$.

Let $uv$ be an edge of $C$. Let $t_1, t_2, t_3$ be the neighbours of $u$ apart from $v$ and let $w_1,w_2,w_3$ be the neighbours of $v$ apart from $u$. Note that, since $uv$ is an edge of $C$, at least one of $t_1, t_2, t_3$ has degree $4$. Similarly, at least one of $w_1,w_2,w_3$ has degree $4$. Now, $u$ and $v$ do not have $3$ common neighbours, since otherwise $G$ contains configuration $C_5$, a contradiction. Furthermore, since $G$ does not contain configuration $C_3$, the vertices $u$ and $v$ have at most one common neighbour. Thus, in what follows, we allow the possibility that $t_1 = w_1$, but always assume that $t_2, t_3 \not\in \{w_1,w_2,w_3\}$ and $w_2, w_3 \not\in \{t_1,t_2,t_3\}$.

Suppose first that $t_1t_2 \not\in E(G)$. Since $G$ does not contain configuration~$C_4$, we must have that $w_1w_2, w_2w_3, w_1w_3 \in E(G)$. Otherwise, since $t_3 \not\in \{w_1,w_2,w_3\}$, we have that $G$ contains configuration $C_4$, a contradiction. But now the vertices $w_1, w_2, w_3$ form a clique, and at least one of them has degree $4$. It follows that $G$ contains configuration $C_3$, a contradiction. 

It follows that all of the edges $t_1t_2, t_1t_3, w_1w_2, w_1w_3 \in E(G)$. As a consequence, $t_1 \neq w_1$, otherwise this vertex would have degree $6$, which is larger than $\Delta(G)$. Thus $\{t_1,t_2,t_3\} \cap \{w_1,w_2,w_3\} = \emptyset$. With this extra information, the argument above shows that, in fact, if any edge amongst $t_1,t_2,t_3$ is not in $E(G)$, then $w_1,w_2,w_3$ induce a clique. Thus, either $\{t_1,t_2,t_3\}$ or $\{w_1,w_2,w_3\}$ induce a clique, which again gives a contradiction since $G$ does not contain configuration $C_3$.

\end{proof}

The proof of Theorem~\ref{th:main} now follows easily.

\begin{proof}[Proof of Theorem~\ref{th:main}]
Let $G$ be a smallest counterexample to the theorem. By Lemma~\ref{lem:ReducibleConfigs}, the graph $G$ does not contain configurations $C_1, \dots, C_5$. Thus, by Lemma~\ref{lem:Structure}, the graph $G_E$ is a forest. But now $G$ admits a good path decomposition by Theorem~\ref{thm:Pyber}, a contradiction.
\end{proof}

\section{Acknowledgements}

The authors wish to thank Fran\c{c}ois Dross for useful discussions. The second author was supported by ERC Advanced Grant GRACOL, project number 320812.


\begin{thebibliography}{99}

\bibitem{Pyber} 
L. Pyber, Covering the edges of a connected graph by paths, J. Combin. Theory Ser. B, \textbf{66} (1996) 152--159.

\bibitem{Fan}
G. Fan, Path decompositions and Gallai's conjecture, J. Combin. Theory Ser. B, \textbf{93} (2005) 117--125.

\bibitem{2kReg}
F. Botler and A. Jim\'{e}nez, On path decompositions of 2k-regular graphs, Electron. Notes Discrete Math., \textbf{50} (2015) 163--168.

\bibitem{Lovasz}
L. Lov\'{a}sz, On covering of graphs, in: Theory of Graphs (ed. P. Erd\H{o}s, G. Katona), Akad. Kiad\'{o},
Budapest (1968) 231--236.

\bibitem{outerplanar}
X. Geng, M. Fang and D. Li, Gallai's conjecture for outerplanar graphs, J. Interdiscip. Math. \textbf{18} (2015) 593--598.

\bibitem{eulerian}
O. Favaron and M. Kouider, Path partitions and cycle partitions of Eulerian graphs of maximum degree 4, Studia Sci. Math. Hungar., \textbf{23} (1988) 237-244.

\bibitem{triangleFree}
A. Jim\'{e}nez and Y. Wakabayashi, On path-cycle decompositions of triangle-free graphs. Preprint.

\end{thebibliography}
\end{document}